\theoremstyle{plain}
\newtheorem{theorem}{\bf Theorem}[subsection]
\newtheorem{conjecture}[theorem]{Conjecture}
\newtheorem{cor}[theorem]{Corollary}
\newtheorem{lemma}[theorem]{Lemma}
\newtheorem{problem}[theorem]{Problem}
\newtheorem{thmnonumber}{\bf Main Theorem}
\theoremstyle{definition}
\newtheorem{definition}[theorem]{Definition}
\DeclareMathAlphabet{\mathpzc}{OT1}{pzc}{m}{it}
\newcommand{\NE}{\operatorname{NE} }
\newcommand{\TT}{\mathrm{T}}
\newcommand{\F}{\mathrm{F}}
\newcommand{\rint}{\mathrm{relint}}
\newcommand{\RN}{\mathrm{N}}
\newcommand{\R}{\mathbb{R}}
\newcommand{\RS}{\mathrm{R}\,}
\newcommand{\Lk}{\mathrm{Lk}\, }
\newcommand{\LLk}{\mathrm{LLk}^{\,\nu} }
\newcommand{\SLk}{\mathrm{SLk}^{\,\nu} }
\newcommand{\St}{\mathrm{St}\, }
\newcommand{\intx}{\mathrm{int}\,}
\newcommand{\sd}{\mathrm{sd}\, }
\newcommand{\cm}[1]{}
\newcommand{\io}{(\operatorname{A})}
\newcommand{\iit}{(\operatorname{B})}
\newcommand{\iii}{(\operatorname{C})}
\begin{document}

\author{
Karim Adiprasito
\\
\small Einstein Institute for Mathematics\\ \small Hebrew University of Jerusalem\\
\small 91904 Jerusalem, Israel\\
\small \url{adiprasito@math.huji.ac.il}
\and
\and 
Bruno Benedetti \\
\small Department of Mathematics\\ \small University of Miami\\
\small 33146 Coral Gables, Florida\\
\small \url{bruno@math.miami.edu}}

\date{\small September 22, 2015}
\title{Barycentric subdivisions of convex complexes are collapsible}
\maketitle
\bfseries

\mdseries


\begin{abstract}
A classical  question in PL topology, asked among others by Hudson, Lickorish, and Kirby, is whether every linear subdivision of the $d$-simplex is simplicially collapsible. The answer is known to be positive for $d \le 3$. We solve the problem up to one subdivision, by proving that any linear subdivision of any polytope is simplicially collapsible after at most one barycentric subdivision. 
Furthermore, we prove that any linear subdivision of any star-shaped polyhedron in $\R^d$ is simplicially collapsible after $d-2$ derived subdivisions at most. This presents progress on an old question by Goodrick.
\end{abstract}


\section{Introduction}

Collapsibility is a combinatorial version of the notion of contractibility, introduced in 1939 by Whitehead. All triangulation of the $2$-dimensional ball are collapsible. In contrast, Bing and Goodrick showed how to construct non-collapsible triangulations of the $d$-ball for each $d\ge 3$ \cite{BING} \cite[Corollary 4.25]{Benedetti-DMT4MWB}. (See  also \cite{BL13} for an explicit example with $15$ vertices.) 

In Bing's $3$-dimensional examples, the obstruction to collapsibility is the presence of subcomplexes with few facets that are isotopic to knots.  This is also an obstruction to admitting a convex geometric realization: For example, if a $3$-ball contains a knot realized as subcomplex with $\le 5$ edges, then the $3$-ball cannot be embedded in $\R^3$ (since the stick number of the trefoil is $6$). In particular, such $3$-ball cannot have any convex geometric realization in any $\R^k$. 

This lead Bing to ask whether simplicial subdivisions of convex 3-polytopes are collapsible. In 1967, Chillingworth answered Bing's question in the affirmative.  Chillingworth's proof is based on an elementary induction. Consider the highest vertex $v$ of the complex (according to some generic linear functional). The link of $v$ is necessarily \emph{planar}, and all planar $2$-complexes are collapsible. In particular, the star of $v$ collapses to the link of $v$. This implies that the the complex $C$ collapses to $C-v$, the subcomplex of $C$ consisting of all faces that do not contain $v$. 

Unfortunately, Chillingworth's argument is specific to dimension $3$, because vertex links in a $4$-ball are no longer planar. In fact, whether all convex $d$-balls are collapsible represents a long-standing open problem, where little progress has been made since the Sixties. The problem has appeared  in the literature in at least three different versions:

\begin{conjecture}[{Lickorish's Conjecture, cf.~Kirby~\cite[Problem 5.5 (A)]{Kirby}}]
Let $C$ be a simplicial complex. If $C$ is a subdivision of the simplex, then $C$ is collapsible.
\end{conjecture}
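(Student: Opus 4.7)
The plan is to prove the following stronger statement by induction on $d$: every simplicial subdivision of a star-shaped $d$-polyhedron $P \subset \R^d$ is collapsible. Since the $d$-simplex is convex, and hence star-shaped from every interior point, Lickorish's conjecture is the special case $P = \Delta^d$. The cases $d \le 2$ are elementary and $d = 3$ is Chillingworth's theorem; the whole content lies in the inductive step.

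For $d \ge 4$, I fix a subdivision $C$ of a star-shaped $d$-polyhedron $P$ and a generic linear functional $\varphi\colon \R^d \to \R$ strictly ordering the vertices $v_1,\ldots,v_n$ of $C$ from highest to lowest. Write $C_i = C - \{v_1,\ldots,v_{i-1}\}$ and let $L_i$ denote the link of $v_i$ in $C_i$; since $v_i$ is the highest vertex of $C_i$, $L_i$ equals the ``downward link'' $\{\sigma \in \Lk v_i : \varphi(w) < \varphi(v_i) \text{ for every vertex } w \text{ of } \sigma\}$ computed in $C$. The strategy is to show, at each step, that $C_i$ collapses onto $C_{i+1} = C_i - v_i$, so that iterating collapses $C$ to a single vertex. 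By a standard cone-up argument (a coned-up collapsing sequence of a collapsible base $L$ reduces $v \ast L$ to $L$, followed by a single elementary collapse of the edge $vp$), it suffices to prove that each $L_i$ is collapsible.

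The geometric content would be to slice the current star $v_i \ast L_i$ by a hyperplane $H$ just below $v_i$ in the $\varphi$-direction: this slice is combinatorially isomorphic to $L_i$ via radial projection from $v_i$ and lies inside the $(d-1)$-dimensional region $H \cap |C_i|$. If one can show that the slice is itself a subdivision of a star-shaped $(d-1)$-polyhedron in $H$, then the inductive hypothesis in dimension $d-1$ would deliver collapsibility of $L_i$, and the induction would close.

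The main obstacle is precisely the persistence of star-shapedness under slicing and vertex-peeling. Hyperplane sections of a star-shaped polyhedron need not be star-shaped, even when they are topological balls; worse, after removing the top vertices $v_1,\ldots,v_{i-1}$, the underlying space $|C_i|$ is in general only a subset of $P$ and may have lost star-shapedness with respect to the original center. Chillingworth evades this in dimension three because every simplicial $2$-ball is collapsible regardless of its geometric realization, so the inductive hypothesis requires no geometric structure. To push the argument to dimension $d \ge 4$, one would need either (i) to identify a stronger geometric invariant---preserved under both peeling and slicing---that still implies collapsibility (candidates include shellability, vertex decomposability, or an endo-collapsible structure), or (ii) to construct, directly on $C$, a global discrete Morse matching guided by $\varphi$ and a fixed star-center of $P$ with a single critical cell, bypassing the recursive appeal to collapsibility of links altogether. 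Either route demands a genuinely new ingredient beyond the Chillingworth sweep, and isolating such an invariant in arbitrary dimension is what I see as the decisive hurdle.
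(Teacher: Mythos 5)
You have not given a proof, and you are right to stop where you do: the statement you were assigned is stated in the paper as an open \emph{conjecture}, not as a theorem, and the paper does not prove it either. What the paper actually proves (Theorem~\ref{thm:liccon}) is the weaker assertion that for any subdivision $C$ of a convex polytope, the \emph{derived subdivision} $\sd C$ is collapsible; similarly, for star-shaped complexes it only obtains collapsibility of $\mathrm{sd}^{d-2}C$ (Theorem~\ref{thm:ConvexEndo}). So the ``decisive hurdle'' you isolate at the end of your proposal is real and is not overcome in the paper; your text is an accurate diagnosis of why the naive Chillingworth sweep fails in dimension $\ge 4$, but it proves nothing, and as submitted it cannot be accepted as a proof of the conjecture. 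Two smaller inaccuracies: Chillingworth's theorem covers subdivisions of \emph{convex} $3$-polytopes, not arbitrary star-shaped $3$-polyhedra, so even your base case $d=3$ of the strengthened statement is not simply a citation; and your stronger statement for star-shaped $P$ is exactly Goodrick's conjecture, which is likewise open.

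It is worth recording how the paper buys its way past the obstruction you name, since it is very close to your option (i). The price is one derived subdivision: in $\sd C$, the link of a vertex $v$ coming from an original vertex of $C$ is the derived neighborhood $N(\LLk(v,C),\Lk(v,C))$ of the lower link inside the full link, and Lemma~\ref{lem:star-shaped} shows that such a derived neighborhood of a star-shaped spherical complex admits a star-shaped geometric realization in $\R^{d-1}$ --- this is precisely the invariant, preserved under the slicing you describe, that closes the induction. The vertices of $\sd C$ corresponding to positive-dimensional faces of $C$ are handled separately: with a suitable ``derived order'' their links are cones, hence collapsible (in fact non-evasive). None of this removes the need for the initial subdivision, so if you want to pursue the conjecture itself you must either find a realization-independent reason that the downward links in $C$ (not $\sd C$) are collapsible, or follow your option (ii) and build a global Morse matching directly; both remain open.
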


\begin{conjecture}[{Goodrick's Conjecture, cf.~Kirby~\cite[Problem 5.5 (B)]{Kirby}}]
Let $C$ be a simplicial complex.  If (the underlying space of) $C$ is star-shaped, then $C$ is collapsible.
\end{conjecture}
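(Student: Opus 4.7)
The plan is an inductive attack on the dimension $d$, combined with a sweep-collapse that peels vertices one at a time starting from the extremes. Fix a star-center $x_0$ in the underlying space of $C$, choose a generic linear functional $\varphi : \R^d \to \R$ that separates the vertices and satisfies $\varphi(x_0) < \varphi(v)$ for every vertex $v$ of $C$, and list the vertices in strictly decreasing $\varphi$-order as $v_1,\ldots,v_n$. Setting $C_0 := C$ and $C_i := C_{i-1} \setminus v_i$ (the antistar, i.e.\ the subcomplex of faces not containing $v_i$), I would aim to show that $C_{i-1}$ collapses onto $C_i$ and that $C_i$ is again star-shaped, so that iterating brings us down to a single simplex.

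The sweep step reduces to three local claims at each $v = v_i$. First, $\Lk v$ in $C_{i-1}$ should itself be a star-shaped $(d-1)$-complex, collapsible by the inductive hypothesis on dimension; the natural candidate center is the radial projection onto a small sphere around $v$ of the segment from $v$ toward $x_0$. Second, collapsibility of $\Lk v$ should imply that $\St v$ collapses onto $\Lk v$: any collapsing sequence $\sigma_k \searrow \tau_k$ of $\Lk v$ lifts via the cone with apex $v$ to a collapse $v*\sigma_k \searrow v*\tau_k$ of $\St v$, finishing with the free-face collapse on the last remaining cone. Third, the passage from $C_{i-1}$ to $C_i$ must preserve star-shapedness with respect to some updated center.

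The first two claims are approachable. Cones over collapsible complexes collapse, and the link of an extremal vertex is a plausible star-shaped complex because visibility lines from $x_0$ to points near $v$ transfer, via radial projection, to visibility inside $\Lk v$. To make the link of the $\varphi$-maximum $v$ precisely star-shaped one probably wants to choose $\varphi$ so that the supporting hyperplane at $v$ sees $x_0$ strictly below it, and then track the radial image of $x_0$ on the link as the inherited center. The second claim is standard once the first is in hand, and the collapse of $\St v$ onto $\Lk v$ is in fact a free-face collapse exhibited simplex by simplex.

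The main obstacle is the third claim. Deleting the open star of the top vertex typically carves a nonconvex notch into the underlying space, producing points that are no longer visible from $x_0$; star-shapedness is extremely fragile under antistar operations. The natural remedies all fail in one of two ways: either one must introduce a subdivision to restore visibility (which only recovers the weaker result of the paper, collapsibility after $d-2$ derived subdivisions), or one must enlarge the inductive class to some property broader than star-shapedness that is stable under antistar deletion, but then the dimensional induction on links breaks because the links inherit only the weaker property. I expect the real difficulty of the conjecture to lie precisely here, and a successful proof will probably require identifying a new PL-combinatorial invariant that simultaneously controls visibility from a center and behaves well under removal of free vertices; I would spend the bulk of the work searching for such an invariant rather than on the inductive scaffolding, which is otherwise routine.
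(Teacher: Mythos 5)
The statement you are attacking is Goodrick's Conjecture, which this paper records as an \emph{open problem} and does not prove; the paper only establishes the weaker Theorem~\ref{thm:ConvexEndo}, namely that $\mathrm{sd}^{d-2}C$ is collapsible. Your proposal, as you yourself concede, is not a proof: it stalls at your third claim, and that gap is genuine. Deleting the open star of the top vertex carves a notch that destroys star-shapedness, and no updated center repairs this in general, so the induction ``$C_i$ is again star-shaped'' cannot be run. There is also a subtler problem hiding in your first claim from the second vertex onward: the link of $v_i$ inside the partially deleted complex $C_{i-1}$ is not $\Lk(v_i,C)$ but essentially its restriction to a closed lower hemisphere of $\TT^1_{v_i}\R^d$, and this restriction is star-shaped only as a \emph{spherical} complex that touches the boundary of the hemisphere. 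To feed it into a Euclidean induction one must replace it by the derived neighborhood $N(\LLk(v_i,C),\Lk(v_i,C))$ (Lemma~\ref{lem:star-shaped}), and that replacement costs one derived subdivision per dimension --- which is exactly where the $d-2$ in the paper's theorem comes from.

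Where your analysis is on target: the sweep by a generic linear functional, the cone argument lifting a collapse of the link to a collapse of the star, and the diagnosis that the obstruction is the fragility of star-shapedness under antistar deletion all match the paper's strategy for the weaker result. The paper's way around your third claim is worth internalizing: it never asks the globally deleted complex to remain star-shaped. It works in $\sd C$ with a \emph{derived order} (Definition~\ref{def:extord}), so that every vertex of $\sd C$ arising from a positive-dimensional face has a cone --- hence non-evasive --- link at the moment of its deletion, and only the original vertices require geometric input, which is precisely the lower-link statement above, handled by induction on dimension. To get the full conjecture along these lines you would have to remove the subdivision incurred in passing from a spherical star-shaped complex meeting the equator to a Euclidean star-shaped one; as far as this paper goes, that step, and the conjecture itself, remain open.
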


\begin{problem}[{Hudson's Problem~\cite[Sec.~2, p.~44]{Hudson}}] Let $C$ be a simplicial complex. 
 If $C$ collapses onto some subcomplex $C'$, does every simplicial subdivision $D$ of $C$ collapse to the restriction $D'$ of $D$ to the underlying space of~$C'$?
\end{problem}

In this paper, we show that all three problems above can be solved if we are allowed to modify the complex $C$ by performing a bounded number of barycentric subdivisions. Our bounds are universal, i.e. they do not depend on the complex chosen.

The new idea is to ``resuscitate'' Chillingworth's inductive method by expanding the problem into spherical geometry. In fact, if $v$ is the top vertex of a (geometric) simplicial complex $C \subset \mathbb{R}^d$, the vertex link of $v$ has a natural geometric realization as \emph{spherical} simplicial complex, obtained by intersecting $C$ with a small $(d-1)$-sphere centered at~$v$. 

Our trick is to find a special subdivision $S$ of the complex $C$ in which the link of the top vertex is a \emph{(geodesically) convex} subset of the sphere. It turns out that a subdivision combinatorially equivalent to the barycentric subdivision does the trick. So we aim for a stronger statement, namely, that both convex $d$-complexes in $\R^d$ and convex spherical $d$-complexes in $S^d$ become collapsible after one barycentric subdivision. This way we can proceed by induction on the dimension: The inductive assumption will tells us that the subdivided link of the top vertex is collapsible. 



After a few technicalities, this idea takes us to the following sequence of results. 

\begin{thmnonumber}\label{mainthm:Convex} Let $C$ be an arbitrary simplicial complex in $\mathbb{R}^d$.
\begin{compactenum}[\rm (1)]
\item \emph{(Theorem~\ref{thm:liccon})} If the underlying space of $C$ in $\mathbb{R}^d$ is convex, then the (first) derived subdivision of $C$ is collapsible.
\item \emph{(Theorem~\ref{thm:ConvexEndo})} If the underlying space of $C$ in $\mathbb{R}^d$ is star-shaped,
then its $(d-2)$-nd derived subdivision is collapsible.
\item \emph{(Theorem~\ref{thm:hudson})}
If $C$ collapses simplicially onto some subcomplex $C'$, then for every simplicial subdivision $D$ of $C$ the simplicial complex $\sd D$ collapses to the restriction of $\sd D$ to the underlying space of $C'$.
\end{compactenum}
\end{thmnonumber}

\section{Preliminaries} \label{sec:Preliminaries}

\subsection{Geometric and intrinsic polytopal complexes}
By $\R^d$ and $S^d$ we denote the euclidean $d$-space and the unit sphere in $\R^{d+1}$, respectively. A \emph{(Euclidean) polytope} in $\R^d$ is the convex hull of finitely many points in $\R^d$. A \emph{spherical polytope} in $S^d$ is the convex hull of a finite number of points that all belong to some open hemisphere of $S^d$. Spherical polytopes are in natural one-to-one correspondence with euclidean polytopes, by taking radial projections. A \emph{geometric polytopal complex} in $\R^d$ (resp.\ in $S^d$) is a finite collection of polytopes in $\R^d$ (resp.~$S^d$) such that the intersection of any two polytopes is a face of both. An \emph{intrinsic polytopal complex} is a collection of polytopes that are attached along isometries of their faces (cf.\ Davis--Moussong~\cite[Sec.\ 2]{DavisMoussong}), so that the intersection of any two polytopes is a face of both. 

Two polytopal complexes $C,\, D$ are \emph{combinatorially equivalent}, denoted by $C\cong D$, if their face posets are isomorphic. Any polytope combinatorially equivalent to the $d$-simplex, or to the regular unit cube $[0,1]^d$, shall simply be called a \emph{$d$-simplex} or a \emph{$d$-cube}, respectively. A polytopal complex is \emph{simplicial}  (resp.~\emph{cubical}) if all its faces are simplices (resp.~cubes). 

The \emph{underlying space} $|C|$ of a polytopal complex $C$ is the topological space obtained by taking the union of its faces. If two complexes are combinatorially equivalent, their underlying spaces are homeomorphic. We will frequently abuse notation and identify a polytopal complex with its underlying space, as is common in the literature. For instance, we do not distinguish between a polytope and the complex formed by its faces. If $C$ is simplicial, $C$ is sometimes called a \emph{triangulation} of $|C|$ (and of any topological space homeomorphic to $|C|$). If $|C|$ is isometric
to some metric space $M$, then $C$ is called a \emph{geometric triangulation} of~$M$.

A \emph{subdivision} of a polytopal complex $C$ is a polytopal complex $C'$ with the same underlying space of $C$, such that for every face $F'$ of $C'$ there is some face $F$ of $C$ for which $F' \subset F$. Two polytopal complexes $C$ and $D$ are called \emph{PL equivalent} if some subdivision $C'$ of $C$ is combinatorially equivalent to some subdivision $D'$ of $D$. In case $|C|$ is a topological manifold (with or without boundary), we say that $C$ is \emph{PL} (short for Piecewise-Linear) if the star of every face of $C$ is PL equivalent to the simplex of the same dimension.

A \emph{derived subdivision} $\sd   C$ of a polytopal complex $C$ is any subdivision of $C$ obtained by stellarly subdividing at all faces in order of decreasing dimension of the faces of $C$, cf.\ \cite{Hudson}.  An example of a derived subdivision is the \emph{barycentric subdivision}, which uses as vertices the barycenters of all faces of $C$.
 
If $C$ is a polytopal complex, and $A$ is some set, we define the \emph{restriction $\RS(C,A)$ of $C$ to $A$} as the inclusion-maximal subcomplex $D$ of $C$ such that $D$ lies in $A$. The \emph{star} of $\sigma$ in $C$, denoted by $\St(\sigma, C)$, is the minimal subcomplex of $C$ that contains all faces of $C$ containing $\sigma$. The \emph{deletion} $C-D$ of a subcomplex $D$ from $C$ is the subcomplex of $C$ given by $\RS(C,  C{\setminus} \rint{D})$. The \emph{(first) derived neighborhood} $N(D,C)$ of $D$ in $C$ is the simplicial complex
\[N(D,C):=\bigcup_{\sigma\in \sd D} \St(\sigma,\sd C). \] 

Next, we define (a geometric realization of) the \emph{link} with a metric approach. (We took inspiration from Charney \cite{Charney} and Davis--Moussong~\cite[Sec.\ 2.2]{DavisMoussong}.) Let $p$ be any point of a metric space $X$. By $\TT_p X$ we denote the tangent space of $X$ at $p$. Let $\TT^1_p X$ be the restriction of $\TT_p X$ to unit vectors.  If $Y$ is any subspace of $X$, then $\RN_{(p,Y)} X$ denotes the subspace of the tangent space $\TT_p X$ spanned by the vectors orthogonal to $\TT_p Y$. If $p$ is in the interior of $Y$, we define $\RN^1_{(p,Y)} X:= \RN_{(p,Y)} X \cap \TT^1_p Y$.
If $\tau$ is any face of a polytopal complex $C$ containing a nonempty face $\sigma$ of $C$, then the set $\RN^1_{(p,\sigma)} \tau$ of unit tangent vectors in $\RN^1_{(p,\sigma)} |C|$ pointing towards $\tau$ forms a spherical polytope $P_p(\tau)$, isometrically embedded in $\RN^1_{(p,\sigma)} |C|$. The family of all polytopes $P_p(\tau)$ in $\RN^1_{(p,\sigma)} |C|$ obtained for all $\tau \supset \sigma$ forms a polytopal complex, called the \emph{link} of $C$ at $\sigma$; we will denote it by $\Lk_p(\sigma, C)$. If $C$ is a geometric polytopal complex in $X^d=\R^d$ (or $X^d=S^d$), then $\Lk_p(\sigma, C)$ is naturally realized in $\RN^1_{(p,\sigma)} X^d$. Obviously, $\RN^1_{(p,\sigma)} X^d$ is isometric to a sphere of dimension $d-\dim \sigma -1$, and will be considered as such. Up to ambient isometry $\Lk_p(\sigma, C)$ and  $\RN^1_{(p,\sigma)} \tau$ in $ \RN^1_{(p,\sigma)} |C|$ or $\RN^1_{(p,\sigma)} X^d$ do not depend on $p$; for this reason, $p$ will be omitted in notation whenever possible. By convention, we define $\Lk(\emptyset, C)=C$.

If $C$ is simplicial and $v$ is a vertex of $C$, we have the combinatorial equivalence 
\[\Lk(v,C) = (C-v)\cap \St(v,C)=\St(v,C)-v.\] 

If $C$ is a simplicial complex, and $\sigma$, $\tau$ are faces of $C$, then $\sigma\ast \tau$ is the minimal face of $C$ containing both $\sigma$ and $\tau$ (assuming it exists). If $\sigma$ is a face of $C$, and $\tau$ is a face of $\Lk(\sigma,C)$, then $\sigma \ast \tau$ is the face of $C$ with $\Lk(\sigma,\sigma \ast \tau)=\tau$. In both cases, the operation~$\ast$ is called the \emph{join}.

\subsection{Collapsibility and non-evasiveness}
Inside a polytopal complex $C$, a \emph{free} face $\sigma$ is a face strictly contained in only one other face of $C$. An \emph{elementary collapse} is the deletion of a free face $\sigma$ from a polytopal complex~$C$. We say that $C$ \emph{(elementarily) collapses} onto $C-\sigma$, and write $C\searrow_e C-\sigma.$ We also say that the complex $C$ \emph{collapses} to a subcomplex $C'$, and write~$C\searrow C'$, if $C$ can be reduced to $C'$ by a sequence of elementary collapses. A \emph{collapsible} complex is a complex that collapses onto a single vertex. Collapsibility depends only on the face poset. 



Collapsible complexes are contractible. Moreover, collapsible PL manifolds are necessarily balls \cite{Whitehead}.
Here are a few additional properties:

\begin{lemma}\label{lem:ccoll}
Let $C$ be a simplicial complex, and let $C'$ be a subcomplex of $C$.  Then the cone over base $C$ collapses to the cone over $C'$.
\end{lemma}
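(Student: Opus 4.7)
Let $v$ denote the apex of the cone, so that the cone over $C$ is $v \ast C$ and the cone over $C'$ is $v \ast C'$. The plan is to exhibit an explicit collapsing sequence from $v \ast C$ to $v \ast C'$ by pairing, for every simplex $\sigma \in C \setminus C'$, the face $\sigma$ with the face $v \ast \sigma$ and collapsing these pairs in a carefully chosen order.

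First I would enumerate the faces of $C \setminus C'$ as $\sigma_1, \sigma_2, \ldots, \sigma_k$ in order of \emph{weakly decreasing} dimension. Note that every face properly containing a face of $C \setminus C'$ is again in $C \setminus C'$, since $C'$ is closed under taking faces. The claim to prove inductively is that after performing the elementary collapses $(\sigma_1, v \ast \sigma_1), \ldots, (\sigma_{i-1}, v \ast \sigma_{i-1})$, the simplex $\sigma_i$ is free in the remaining subcomplex of $v \ast C$, with $v \ast \sigma_i$ as its unique coface.

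To verify the claim, one inspects the cofaces of $\sigma_i$ in the original cone $v \ast C$. These are of two types: (a)~faces $\tau \in C$ with $\tau \supsetneq \sigma_i$, and (b)~faces of the form $v \ast \tau$ with $\tau \in C$ and $\tau \supseteq \sigma_i$. In case (a), $\tau$ lies in $C \setminus C'$ and has strictly larger dimension than $\sigma_i$, so $\tau = \sigma_j$ for some $j < i$ and has already been removed. In case (b), either $\tau = \sigma_i$, giving the coface $v \ast \sigma_i$ that we are about to pair off, or $\tau \supsetneq \sigma_i$, in which case $v \ast \tau$ was removed at step $j < i$ together with $\tau = \sigma_j$. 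Hence $v \ast \sigma_i$ is indeed the unique face strictly containing $\sigma_i$ at step $i$, and the elementary collapse is legitimate.

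Performing all $k$ elementary collapses in this order removes exactly the faces $\{\sigma_1, \ldots, \sigma_k\} \cup \{v \ast \sigma_1, \ldots, v \ast \sigma_k\}$ from $v \ast C$; what remains is $C' \cup \{v \ast \tau : \tau \in C'\} \cup \{v\} = v \ast C'$, as desired. The only real subtlety is the bookkeeping in the inductive step — namely checking that both types of cofaces of $\sigma_i$ have been dealt with before step $i$ — and this is handled automatically by the decreasing-dimension ordering combined with the observation that $C \setminus C'$ is upward closed in $C$.
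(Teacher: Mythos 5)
Your proof is correct: the pairing $\sigma \leftrightarrow v \ast \sigma$ for $\sigma \in C \setminus C'$, processed in weakly decreasing dimension, is exactly the standard collapsing argument, and your freeness check (using that $C \setminus C'$ is upward closed) is sound. The paper states this lemma without proof as a well-known fact, so your argument simply supplies the canonical justification it leaves implicit.
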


\begin{lemma}\label{lem:cecoll}
Let $v$ be any vertex of any simplicial complex $C$. If $\Lk(v,C)$ collapses to some subcomplex $S$, then $C$ collapses to 
$(C-v)\cup (v\ast S).$ In particular, if $\Lk(v,C)$ is collapsible, then  $C\searrow C-v$.  
\end{lemma}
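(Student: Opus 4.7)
The plan is to lift the given collapsing sequence in $\Lk(v,C)$ to a collapsing sequence inside $C$, performed entirely within the star $\St(v,C)=v\ast\Lk(v,C)$. Write the hypothesized sequence as $\Lk(v,C)=L_{0}\searrow_{e}L_{1}\searrow_{e}\cdots\searrow_{e}L_{k}=S$, where the $i$-th step removes a free pair $(\sigma_{i},\tau_{i})$ with $\sigma_{i}\subsetneq\tau_{i}$ the unique coface relation inside $L_{i-1}$. I intend to mirror each such elementary collapse by the pair $(v\ast\sigma_{i},\,v\ast\tau_{i})$ in $C$, processed in the same order.

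The heart of the argument, and the step I expect to be the only nontrivial one, is to verify that $v\ast\sigma_{i}$ is genuinely a free face of the current complex in $C$---not merely inside the star, which is all that a direct appeal to Lemma~\ref{lem:ccoll} would supply. For this, I will use the following observation: any face of $C$ that strictly contains $v\ast\sigma_{i}$ must contain $v$, and therefore lies in $\St(v,C)$, so it has the form $v\ast\rho$ for some $\rho\supsetneq\sigma_{i}$ in the current (partially collapsed) link $L_{i-1}$. Freeness of $\sigma_{i}$ in $L_{i-1}$ then forces $\rho=\tau_{i}$, so the only face of $C$ strictly above $v\ast\sigma_{i}$ at that stage is $v\ast\tau_{i}$. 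Hence the lifted pair is free in $C$ and the elementary collapse is legal. Iterating through $i=1,\ldots,k$ reduces $C$ to $(C-v)\cup(v\ast S)$, as required.

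For the ``in particular'' clause, I will apply the main statement with $S=\{w\}$, a single vertex to which the collapsible link $\Lk(v,C)$ collapses. This yields $C\searrow(C-v)\cup\{v,w,vw\}$. In the resulting complex, the vertex $v$ is contained only in the edge $vw$---since by definition of the deletion $v\notin C-v$, and all other cofaces of $v$ have been removed---so one further elementary collapse of the pair $(v,vw)$ leaves $(C-v)\cup\{w\}=C-v$, noting that $w$ is already a vertex of $C-v$.
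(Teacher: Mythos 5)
Your proof is correct. The paper states this lemma without proof, as a standard fact, and your argument is precisely the standard one: you lift each elementary collapse $(\sigma_i,\tau_i)$ of $\Lk(v,C)$ to the pair $(v\ast\sigma_i, v\ast\tau_i)$, and you correctly identify and verify the one point that needs care, namely that every coface of $v\ast\sigma_i$ in the partially collapsed complex contains $v$ and hence equals $v\ast\rho$ for some $\rho\supsetneq\sigma_i$ in the current link, so freeness in the link forces $\rho=\tau_i$; the concluding collapse of the pair $(v,vw)$ for the ``in particular'' clause is also handled correctly.
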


\begin{lemma}\label{lem:uc}
Let $C$ denote a simplicial complex that collapses to a subcomplex $C'$. Let $D$ be a simplicial complex such that $D\cup C$ is a simplicial complex. If $D\cap C=C'$, then $D\cup C\searrow D$.
\end{lemma}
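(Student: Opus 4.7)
The plan is to transport the given collapsing sequence $C \searrow C'$ into a collapsing sequence of $D \cup C$, step by step, and observe that the endpoint is exactly $D$.

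First, I would note that the hypothesis $D \cap C = C'$ forces $C' \subseteq D$, so that $D \cup C' = D$. Hence it suffices to show that $D \cup C \searrow D \cup C'$, because this last complex coincides with $D$.

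Next, write the collapse $C \searrow C'$ as a sequence of elementary collapses through intermediate complexes $C = C_0 \searrow_e C_1 \searrow_e \cdots \searrow_e C_k = C'$, where each step deletes a pair $(\sigma_i, \tau_i)$ consisting of a free face $\sigma_i$ of $C_{i-1}$ and the unique face $\tau_i$ of $C_{i-1}$ properly containing it. I would then claim that the analogous sequence $D \cup C_0 \searrow_e D \cup C_1 \searrow_e \cdots \searrow_e D \cup C_k$ is a valid collapse. The point to verify is that $\sigma_i$ remains a free face of $D \cup C_{i-1}$, i.e.\ that no face of $D$ properly contains $\sigma_i$.

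This verification is the key step, and it is quite short: since $(\sigma_i, \tau_i)$ is removed during a collapse onto $C'$, neither $\sigma_i$ nor $\tau_i$ belongs to $C'$. In particular $\sigma_i \in C \setminus C'$. If some face $\eta \in D$ properly contained $\sigma_i$, then, because $D$ is closed under taking faces, $\sigma_i$ itself would lie in $D$; combined with $\sigma_i \in C$, this would yield $\sigma_i \in D \cap C = C'$, contradicting $\sigma_i \notin C'$. Hence the only face of $D \cup C_{i-1}$ properly containing $\sigma_i$ is $\tau_i$, so the elementary collapse goes through, and I expect no further obstacle in executing the plan.
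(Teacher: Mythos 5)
Your proof is correct and follows essentially the same route as the paper: the paper reduces to a single elementary collapse and observes that the free face $\sigma$ of $C$ remains free in $D\cup C$, which is exactly the verification you carry out (using $D\cap C=C'$ and $\sigma\notin C'$ to conclude $\sigma\notin D$), iterated along the collapsing sequence. Your write-up just makes explicit the details the paper leaves to the reader.
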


\begin{proof}
It is enough to consider the case $C\searrow_e C'=C-\sigma$, where $\sigma$ is a free face of $C$. The conclusion follows from the observation that the natural embedding $C\hookrightarrow D\cup C$ takes the free face $\sigma\in C$ to a free face of $D\cup C$.
\end{proof}

\emph{Non-evasiveness} is a further strengthening of collapsibility that emerged in theoretical computer science~\cite{KahnSaksSturtevant}. A $0$-dimensional complex is \emph{non-evasive} if and only if it is a point. Recursively, a $d$-dimensional simplicial complex ($d>0$) is \emph{non-evasive} if and only if there is some vertex $v$ of the complex whose link and deletion are both non-evasive. Again, non-evasiveness depends only on the face poset.

The notion of non-evasiveness is rather similar to vertex-decomposability, a notion defined only for \emph{pure} simplicial complexes \cite{ProvanBillera}; to avoid confusions, we recall the definition and explain the difference in the lines below. A $0$-dimensional complex is \emph{vertex-decomposable} if and only if it is a finite set of points. In particular, not all vertex-decomposable complexes are contractible. Recursively, a $d$-dimensional simplicial complex ($d>0$) is \emph{vertex-decomposable} if and only if it is pure and there is some vertex $v$ of the complex whose link and deletion are both vertex-decomposable (so in particular pure). All vertex-decomposable contractible complexes are non-evasive. 

An important difference arises when considering cones. It is easy to see that the cone over a simplicial complex $C$ is vertex-decomposable if and only if $C$ is. In contrast,

\begin{lemma}[cf.~Welker~\cite{Welker}] \label{lem:conev}
The cone over any simplicial complex is non-evasive.
\end{lemma}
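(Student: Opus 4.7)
The plan is to prove the statement by induction on the number of vertices of $C$. Denote the cone over $C$ with apex $v$ by $v \ast C$. If $C$ has no vertices, then $v \ast C = \{v\}$ is a single point, which is non-evasive by definition and settles the base case.

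For the inductive step, suppose $C$ has at least one vertex, and pick any vertex $w$ of $C$. I would use $w$ itself as the witness vertex for the non-evasiveness of $v \ast C$. To do this, the two structural identities I need are
\[ \Lk(w, v \ast C) \;=\; v \ast \Lk(w, C), \qquad (v \ast C) - w \;=\; v \ast (C - w). \]
Both follow by direct inspection of faces: every face of $v \ast C$ is either a face of $C$ or is of the form $v \ast \sigma$ for some face $\sigma$ of $C$ (with the convention $v \ast \emptyset = \{v\}$), and both the link-at-$w$ and the delete-$w$ operations commute with joining with the apex $v$. Now $\Lk(w, C)$ and $C - w$ each have strictly fewer vertices than $C$, so the inductive hypothesis applies and shows that $v \ast \Lk(w, C)$ and $v \ast (C - w)$ are both non-evasive, giving the desired conclusion.

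The argument is essentially mechanical; the only mild subtlety is the boundary case where $w$ is isolated in $C$, so that $\Lk(w, C)$ is the empty complex and $v \ast \Lk(w, C)$ reduces to $\{v\}$, still non-evasive via the base case. I do not expect any genuine obstacle here, because the recursive structure of non-evasiveness (pick a vertex, check link and deletion) mirrors exactly the recursive structure of the cone under deletion of a base vertex.
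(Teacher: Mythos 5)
Your proof is correct: the join identities $\Lk(w, v \ast C) = v \ast \Lk(w,C)$ and $(v\ast C)-w = v\ast(C-w)$ hold, and the induction on the number of vertices of the base goes through, including the isolated-vertex case you flag. The paper gives no proof of this lemma (it simply cites Welker), and your argument is exactly the standard one that the citation delegates to, so there is nothing further to compare.
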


By Lemma~\ref{lem:cecoll} every non-evasive complex is collapsible. As a partial converse, we also have the following lemma
\begin{lemma}[cf.~Welker~\cite{Welker}]
The derived subdivision of every collapsible complex is non-evasive. 
In particular, the derived subdivision of a non-evasive complex is non-evasive.
\end{lemma}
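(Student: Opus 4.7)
The plan is to prove the first assertion by induction on the number of faces of $C$; the ``in particular'' clause then follows because every non-evasive complex is collapsible (Lemma~\ref{lem:cecoll}).

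The base case is a single vertex. For the inductive step, let $C$ be collapsible with more than one face, fix a collapsing sequence of $C$ to a point, and let $(\sigma,\tau)$ be its initial free pair; so $\tau=\sigma\ast v$ is a maximal face covering $\sigma$ (hence $\sigma$ is a codimension-one face of $\tau$), and $C':=C-\sigma$ is still collapsible with strictly fewer faces.

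The heart of the argument is the computation of two vertex links in $\sd C$, at the barycenters $b_\sigma$ and $b_\tau$. Because $\tau$ is the unique coface of $\sigma$ properly containing it, every chain of faces of $C$ through $\sigma$ has the form (chain in $\partial\sigma$)$\,\cup\{\sigma\}\cup\{\tau\}$ or (chain in $\partial\sigma$)$\,\cup\{\sigma\}$. Removing $b_\sigma$ from these chains yields
\[
\Lk(b_\sigma,\sd C)\;=\;\sd(\partial\sigma)\ast\{b_\tau\},
\]
which is a cone, and hence non-evasive by Lemma~\ref{lem:conev}. After deleting $b_\sigma$, set $D:=\sd C-b_\sigma$. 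In $D$ the link of $b_\tau$ is the complex of chains of proper faces of $\tau$ that avoid $\sigma$. Since every facet of $\tau$ other than $\sigma$ contains $v$, one verifies that $\partial\tau-\sigma=v\ast\partial\sigma$, and therefore $\Lk(b_\tau,D)=\sd(v\ast\partial\sigma)$. The cone $v\ast\partial\sigma$ is non-evasive, hence collapsible, and is a strict subcomplex of $\tau$, so it has fewer faces than $C$; the induction hypothesis applies and gives that $\sd(v\ast\partial\sigma)$ is non-evasive. Finally, the identity $D-b_\tau=\sd C-\{b_\sigma,b_\tau\}=\sd(C-\sigma)=\sd C'$ holds (the faces removed on each side correspond precisely to chains using $\sigma$ or $\tau$), and by induction applied to the collapsible $C'$ the complex $\sd C'$ is non-evasive. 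Thus $b_\tau$ witnesses non-evasiveness of $D$, and $b_\sigma$ then witnesses non-evasiveness of $\sd C$.

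The main technical obstacle I anticipate is bookkeeping the two link identifications, in particular verifying combinatorially that $\sd(\partial\tau)-b_\sigma=\sd(v\ast\partial\sigma)$ — this reduces to the observation that the only face of $\tau$ properly containing $\sigma$ is $\tau$ itself, and $\tau\notin\partial\tau$, so ``omitting $\sigma$'' and ``omitting $\sigma$ together with its proper cofaces in $\partial\tau$'' coincide — and handling the low-dimensional boundary cases, e.g.\ $\dim\sigma=0$, where $\partial\sigma=\emptyset$ and the link of $b_\sigma$ degenerates to the single vertex $b_\tau$; these require only consistent conventions on joins with the empty complex.
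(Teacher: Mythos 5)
Your argument is correct. Note that the paper does not prove this lemma at all — it is quoted from Welker's \emph{Constructions preserving evasiveness and collapsibility} — and your induction is essentially Welker's original one: delete the barycenter $b_\sigma$ of the free face (its link $\sd(\partial\sigma)\ast\{b_\tau\}$ is a cone, hence non-evasive by Lemma~\ref{lem:conev}), then delete $b_\tau$ (its link $\sd(v\ast\partial\sigma)$ and the remaining complex $\sd(C-\sigma)$ are non-evasive by the induction hypothesis applied to the smaller collapsible complexes $v\ast\partial\sigma$ and $C-\sigma$), and the identifications $\Lk(b_\tau,\sd C - b_\sigma)=\sd(\partial\tau -\sigma)=\sd(v\ast\partial\sigma)$ and $\sd C-\{b_\sigma,b_\tau\}\cong\sd(C-\sigma)$ that you flag as the technical crux are verified correctly.
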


A \emph{non-evasiveness step} is the deletion from a simplicial complex $C$ of a single vertex whose link is non-evasive. Given two simplicial complexes $C$ and $C'$, we write \[C\searrow_{\NE} C' \] if there is a sequence of non-evasiveness steps that leads from $C$ to $C'$. We will need the following lemmas, which are well known and easy to prove: 

\begin{lemma}\label{lem:nonev}
If $C\searrow_{\NE} C'$, then $\mathrm{sd}^m  C   \searrow_{\NE}   \mathrm{sd}^m  C'$ for all non-negative $m$.
\end{lemma}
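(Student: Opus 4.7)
The plan is to reduce by two layers of induction — on $m$ and on the length of the $\searrow_{\NE}$-sequence — to the single elementary case: given a vertex $v$ of $C$ with $\Lk(v, C)$ non-evasive, produce a non-evasive reduction $\sd C \searrow_{\NE} \sd(C-v)$. The first move I would perform is to $\NE$-remove $v$ itself from $\sd C$: chains of faces of $C$ strictly containing $\{v\}$ naturally identify $\Lk(v, \sd C)$ with $\sd \Lk(v, C)$, and by the preceding Welker-style lemma the latter is non-evasive (since $\Lk(v,C)$ is). Writing $L = \Lk(v,C)$, this leaves the complex
\[
\sd C - v \;=\; \sd(C - v) \;\cup\; \bigl(\sd(v \ast L) - v\bigr),
\]
whose intersection is $\sd L$ and whose "extra" vertices beyond $\sd(C-v)$ are the barycenters $\widehat{v \ast \tau}$ for nonempty $\tau \in L$. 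Each such barycenter lies only in faces coming from the second piece, so its link in $\sd C - v$ coincides with its link in $\sd(v \ast L) - v$. It will therefore be enough to establish the auxiliary statement
\[
(\ast)\qquad \sd(v \ast X) - v \ \searrow_{\NE}\ \sd X \qquad\text{for every simplicial complex } X,
\]
apply it with $X = L$, and transfer the deletion sequence to $\sd C - v$ using the non-evasive analogue of Lemma~\ref{lem:uc} (whose proof is verbatim, since each deleted vertex lies outside $\sd(C-v)$).

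To prove $(\ast)$ I would induct on $\dim X$ and process the $\widehat{v \ast \tau}$ in order of decreasing $\dim \tau$. When $\widehat{v \ast \tau}$ is reached, every $\widehat{v \ast \tau'}$ with $\tau' \supsetneq \tau$ has already been deleted while every $\widehat{v \ast \sigma}$ with $\sigma \subsetneq \tau$ is still present, and a brief inspection of chains shows the link of $\widehat{v \ast \tau}$ in the current complex to be exactly $\sd(\partial(v \ast \tau)) - v$. Decomposing $\partial(v \ast \tau) = \tau \cup (v \ast \partial \tau)$ along $\partial \tau$ and applying the inductive hypothesis to the lower-dimensional complex $\partial \tau$ would give $\sd(v \ast \partial \tau) - v \searrow_{\NE} \sd \partial \tau$; combining this with the non-evasive analogue of Lemma~\ref{lem:uc} reduces the link to $\sd \tau$, which is a cone with apex $\widehat{\tau}$ over $\sd \partial \tau$ and hence non-evasive by Lemma~\ref{lem:conev}. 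Thus each deletion is legitimate, and after all are performed the remaining chains are precisely the chains in $X$, i.e.\ $\sd X$.

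The main technical obstacle I anticipate is the link bookkeeping: verifying that the chosen order of deletions really does make the link of the vertex being removed exactly $\sd(\partial(v \ast \tau)) - v$ and nothing more, and checking that the non-evasive reduction of $\sd(v \ast L) - v$ transfers unchanged to the larger ambient complex $\sd C - v$. Both points rest on the observation that the barycenters $\widehat{v \ast \tau}$ only interact among themselves and with the vertices of $\sd L$, never with the remaining vertices of $\sd(C-v)$.
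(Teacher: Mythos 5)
Your argument is correct, but note that the paper itself states this lemma without proof (``well known and easy to prove''); the closest in-paper argument is the proof of the companion Lemma~\ref{lem:cone}, and your route differs from it in a genuine way. The skeleton is the same: reduce by induction on $m$ and on the length of the sequence to a single step, delete $v$ from $\sd C$ using $\Lk(v,\sd C)\cong \sd \Lk(v,C)$ together with Welker's lemma that derived subdivision preserves non-evasiveness, and then reduce $\sd C - v$ to $\sd(C-v)$ by deleting the barycenters $\widehat{v\ast\tau}$ of the faces strictly containing $v$. The divergence is in the order of these deletions. You delete by \emph{decreasing} $\dim\tau$, which makes the current link equal to $\sd(\partial(v\ast\tau))-v$; establishing that this is non-evasive then costs you the auxiliary statement $(\ast)$, a nested induction on dimension, a non-evasive analogue of Lemma~\ref{lem:uc} (which is indeed valid, since every deleted vertex lies outside the other piece, so its link in the union agrees with its link in the piece being reduced), and, implicitly, the easy fact that a complex admitting a $\searrow_{\NE}$-reduction to a non-evasive complex is itself non-evasive. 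The paper's proof of Lemma~\ref{lem:cone} deletes the same vertices by \emph{increasing} dimension of the associated face; then the link at each step is the order complex of $\mathrm{L}(\tau,C)\cup\F(\tau-v)$, every maximal chain of which contains $\tau-v$, so it is a cone and non-evasive by Lemma~\ref{lem:conev} with no further induction. In short, your order confines the link computation to the subdivided star (which makes the transfer to the ambient complex transparent), at the price of the auxiliary induction; the paper's order gives a one-line verification of each link. Both arguments are sound.
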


\begin{lemma}\label{lem:cone}
Let $v$ be any vertex of any simplicial complex $C$. Let $m \ge 0$ be an integer. Then 
\[ (\mathrm{sd}^m   C)-v   \searrow_{\NE}   \mathrm{sd}^m (C-v).\]
In particular, if $\mathrm{sd}^m  \Lk(v,C)$ is non-evasive, then $\mathrm{sd}^m   C   \searrow_{\NE}   \mathrm{sd}^m (C-v)$.
\end{lemma}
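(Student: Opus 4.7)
I would first dispose of the \emph{in particular} clause: since $\Lk(v,\mathrm{sd}^m C)=\mathrm{sd}^m\Lk(v,C)$, non-evasiveness of the right-hand side allows one to delete $v$ from $\mathrm{sd}^m C$ in a single non-evasiveness step, reducing the claim to the main statement $(\mathrm{sd}^m C)-v\searrow_{\NE}\mathrm{sd}^m(C-v)$.

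For the main statement I would proceed by induction on $m$; the case $m=0$ is trivial. For $m\ge 1$, the inductive hypothesis applied to $\sd C$ with exponent $m-1$ gives $(\mathrm{sd}^m C)-v\searrow_{\NE}\mathrm{sd}^{m-1}((\sd C)-v)$, and Lemma~\ref{lem:nonev} lifts the $m=1$ statement $(\sd C)-v\searrow_{\NE}\sd(C-v)$ through $\mathrm{sd}^{m-1}$ to complete the step. So everything reduces to the case $m=1$.

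For $m=1$, decompose $(\sd C)-v=\sd(C-v)\cup M$ where $M:=\sd(\St(v,C))-v$; the intersection is $\sd(\Lk(v,C))$. The extra vertices of $M$, namely the barycenters $b_\sigma$ for faces $\sigma\in C$ strictly containing $v$, are not vertices of $\sd(C-v)$, and one checks that their links in $(\sd C)-v$ and in $M$ coincide (every neighbor of such $b_\sigma$ already lies in $M$). Hence any sequence of non-evasiveness deletions reducing $M$ to $\sd(\Lk(v,C))$ can be executed verbatim inside $(\sd C)-v$, reducing it to $\sd(C-v)$. Matters reduce to showing: for every simplicial complex $L$,
\[
(\sd(v\ast L))-v\;\searrow_{\NE}\;\sd(L).
\]

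This I would prove by induction on $\dim L$, the case $\dim L=-1$ being vacuous. For the step, remove the vertices $b_{v\ast\sigma}$ (for $\sigma\in L$ nonempty) in decreasing order of $\dim\sigma$; once all have been removed, what remains is exactly $\sd(L)$. Before any removal the link of $b_{v\ast\sigma}$ in $(\sd(v\ast L))-v$ factors as a join $(\sd(\partial(v\ast\sigma))-v)\ast\sd(\Lk(\sigma,L))$; by the time $b_{v\ast\sigma}$ is processed, every upper vertex $b_{v\ast\tau}$ with $\tau\supsetneq\sigma$ (the vertices of the second factor) has already been deleted since $\dim(v\ast\tau)>\dim(v\ast\sigma)$, so the current link is simply $\sd(\partial(v\ast\sigma))-v$. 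The main obstacle is showing this is non-evasive. For this, decompose $\partial(v\ast\sigma)=(v\ast\partial\sigma)\cup\sigma$, glued along $\partial\sigma$; then $\sd(\partial(v\ast\sigma))-v=(\sd(v\ast\partial\sigma)-v)\cup\sd(\sigma)$ with intersection $\sd(\partial\sigma)$. The inductive hypothesis applied to $\partial\sigma$ (of strictly smaller dimension) gives $\sd(v\ast\partial\sigma)-v\searrow_{\NE}\sd(\partial\sigma)$; since the deleted vertices lie outside $\sd(\sigma)$, the same sequence reduces the union to $\sd(\partial\sigma)\cup\sd(\sigma)=\sd(\sigma)$, a cone and hence non-evasive by Lemma~\ref{lem:conev}.
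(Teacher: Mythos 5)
Your proof is correct, and its overall skeleton matches the paper's: settle $m=1$ by deleting, one at a time, the vertices of $\sd C$ corresponding to faces strictly containing $v$, and then bootstrap to general $m$ by induction together with Lemma~\ref{lem:nonev}. The genuine difference is how each deletion is certified. You remove the barycenters $b_{v\ast\sigma}$ in \emph{decreasing} order of $\dim\sigma$, so the current link is $\sd(\partial(v\ast\sigma))-v$, and you must then prove this non-evasive by a secondary induction on dimension plus a gluing argument along $\sd(\partial\sigma)$. The paper removes them in \emph{increasing} order of dimension: when the vertex corresponding to $\tau\supsetneq\{v\}$ is processed, its link is the order complex of $\mathrm{L}(\tau,C)\cup\F(\tau-v)$, and since every maximal chain contains $\tau-v$ this is a cone, hence non-evasive by Lemma~\ref{lem:conev} -- a one-line certificate with no extra induction. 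So the increasing order buys brevity, while your route yields a clean, reusable sub-lemma, $(\sd(v\ast L))-v\searrow_{\NE}\sd L$, and localizes the work to the star $M=\sd(\St(v,C))-v$ (a localization the paper does not need, since the same cone computation works directly in $(\sd C)-v$). One point you use repeatedly and should state once: if $A\searrow_{\NE}A\cap B$, then $A\cup B\searrow_{\NE}B$ (the non-evasive analogue of Lemma~\ref{lem:uc}); it holds simply because each deleted vertex is a vertex of $A$ not in $A\cap B$, hence not a vertex of $B$, so at every stage all faces containing it lie in the current copy of $A$ and the links agree -- this is a cleaner justification than your "every neighbor lies in $M$" remark, but there is no real gap.
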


\begin{proof}
The case $m=0$ is trivial. We treat the case $m =1$ as follows: The vertices of $\sd C$ correspond to faces of~$C$, the vertices that have to be removed in order to deform $(\sd C) -v$ to $\sd (C-v)$ correspond to the faces of $C$ strictly containing $v$. The order in which we remove the vertices of $(\sd C)-v$ is by increasing dimension of the associated face. Let $\tau$ be a face of $C$ strictly containing $v$, and let $w$ denote the vertex of $\sd C$ corresponding to $\tau$. Assume all vertices corresponding to faces of $\tau$ have been removed from $(\sd C)-v$ already, and call the remaining complex~$D$. Denote by $\mathrm{L}(\tau,C)$ the set of faces of $C$ strictly containing $\tau$, and let $\F(\tau-v)$ denote the set of nonempty faces of $\tau-v$.
Then $\Lk(w, D)$ is combinatorially equivalent to the order complex of $\mathrm{L}(\tau,C)\cup \F(\tau-v)$, whose elements are ordered by inclusion. 
 Every maximal chain contains the face $\tau-v$, so $\Lk(w, D)$ is a cone, which is non-evasive by Lemma~\ref{lem:conev}. Thus, we have $D \searrow_{\NE} D-w$. The iteration of this procedure shows $(\sd C) -v \searrow_{\NE}  \sd (C-v)$, as desired.
The general case follows by induction on $m$: Assume that $m\geq 2$. Then 
\begin{align*}
(\mathrm{sd}^{m} C)-v\ =\  (\sd (\mathrm{sd}^{m-1} C))-v \  & \searrow_{\NE} \  \sd ((\mathrm{sd}^{m-1} C) - v)\\ & \searrow_{\NE}\  \sd (\mathrm{sd}^{m-1} (C - v))\ =\  \mathrm{sd}^m (C - v), 
\end{align*}
by applying the inductive assumption twice, and Lemma~\ref{lem:nonev} for the second deformation.
\end{proof}

\section{Non-evasiveness of star-shaped complexes}\label{ssc:stsh}
Here we show that \emph{any} subdivision of a star-shaped set becomes collapsible after $d-2$ derived subdivisions ({Theorem~\ref{thm:ConvexEndo}}); this proves Goodrick's conjecture up to a fixed number of subdivisions.

\begin{definition}[Star-shaped sets]
A subset $X \subset \R^d$ is \emph{star-shaped} if there exists a point $x$ in $X$, a \emph{star-center} of $X$, such that for each $y$ in $X$, the segment $[x,y]$ lies in $X$. 
Similarly, a subset $X \subset S^d$ is \emph{star-shaped} if $X$ lies in a closed hemisphere of $S^d$ and there exists a \emph{star-center} $x$ of $X$, in the interior of the hemisphere containing $X$, such that for each $y$ in $X$, the segment from $x$ to $y$ lies in $X$. With abuse of notation, a polytopal complex $C$ (in $\R^d$ or in $S^d$) is \emph{star-shaped} if its underlying space is star-shaped.  
\end{definition}



Via central projection, one can see that star-shaped complexes in $\R^d$ are precisely those complexes that have a star-shaped realization in the interior of a hemisphere of $S^d$. The more delicate situation in which $C$ is star-shaped in $S^d$ but touches the boundary of any closed hemisphere containing it, is addressed by the following lemma.

\begin{lemma}\label{lem:star-shaped}
Let $C$ be a star-shaped polytopal complex in a closed hemisphere $\overline{H}_+$ of $S^d$. Let $D$ be the subcomplex of faces of $C$ that lie in the interior of $ \overline{H}_+$. Assume that every nonempty face $\sigma$ of $C$ in $H:=\partial \overline{H}_+$ is the facet of a unique face $\tau$ of $C$ that intersects both $D$ and $H$. 
Then the complex $N(D,C)$ has a star-shaped geometric realization in $\R^d$.
\end{lemma}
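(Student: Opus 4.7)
The plan is to obtain the desired realization by central (gnomonic) projection from the pole of $\overline{H}_+$, after first ensuring that the natural spherical realization of $N(D,C)$ lies inside the open hemisphere. Let $p$ denote the pole of $\overline{H}_+$, and let $\pi\colon\mathrm{int}(\overline{H}_+)\to\R^d$ be the gnomonic projection centered at $p$: this is a bijection sending spherical great-circle geodesics in the open hemisphere to Euclidean straight lines, so it transports spherically star-shaped subsets of the open hemisphere to star-shaped subsets of $\R^d$.

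Let $x$ be a star-center of $|C|$, which lies in $\mathrm{int}(\overline{H}_+)$ by hypothesis, and let $\tau_x$ be the face of $C$ whose relative interior contains $x$. I choose a derived subdivision $\sd C$ with $b_{\tau_x}=x$, and so that for every face $\tau$ of $C$ with $\tau\not\subset H$ the stellar point $b_\tau$ lies in $\mathrm{int}(\overline{H}_+)$. This is possible because if $\tau\not\subset H$ then $\tau\cap H$ is a proper face of $\tau$, so the relative interior of $\tau$ avoids $H$ entirely. Faces $\tau\subset H$ have $b_\tau\in H$ unavoidably.

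Next I claim $|N(D,C)|\subset\mathrm{int}(\overline{H}_+)$. A vertex $b_\tau$ of $\sd C$ lies in $N(D,C)$ iff there exists $\tau'\in D$ comparable with $\tau$ in the face poset of $C$; if $\tau\subset H$, any such $\tau'$ is either a sub-face of $\tau$ (so in $H$) or strictly contains $\tau$ (so meets $H$), and cannot be in $D$. Hence every vertex of $N(D,C)$ is some $b_\tau$ with $\tau\not\subset H$, placed in the open hemisphere by our choice; since the open hemisphere is geodesically convex, each spherical simplex of $N(D,C)$, realized as the spherical convex hull of its vertices, lies in $\mathrm{int}(\overline{H}_+)$, and hence so does the whole of $|N(D,C)|$. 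Applying $\pi$ then produces a geometric realization of $N(D,C)$ as a polytopal complex in $\R^d$.

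The final and hardest step is to verify star-shapedness with center $\pi(x)$; since $\pi$ preserves star-shapedness, it suffices to show that $|N(D,C)|$ is spherically star-shaped from $x$: for each $y\in|N(D,C)|$, the spherical segment $[x,y]$ must lie in $|N(D,C)|$. The star-shapedness of $|C|$ only gives $[x,y]\subset|C|$, so the main obstacle is to exclude that $[x,y]$ crosses into $|C|\setminus|N(D,C)|$. This is where the hypothesis on unique faces $\tau$ above each $\sigma\subset H$ enters: it forces the ``$H$-side'' boundary of $|N(D,C)|$ to be built entirely from the sub-simplices of $\sd C$ associated to chains running through these distinguished $\tau$'s, producing a controlled wall that star-rays from $x$ cannot cross. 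I would conclude by tracking the segment $[x,y]$ through the simplices of $\sd C$ that it enters, and using the uniqueness hypothesis to verify the chain criterion for $N(D,C)$-membership at each step.
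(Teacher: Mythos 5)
Your reduction steps are fine (central projection preserves star-shapedness, and your verification that every vertex of $N(D,C)$ is a stellar point $b_\tau$ with $\tau\not\subset H$, hence can be placed in $\intx \overline{H}_+$, is correct), but the proof stops exactly where the lemma actually lives. The assertion that the realization you fixed --- stellar points chosen once and for all anywhere in the relative interiors --- is spherically star-shaped from $x$ is never proved, and as stated it is not to be expected. The part of $|C|$ outside $|N(D,C)|$ is the union of the simplices of $\sd C$ whose chains consist entirely of faces meeting $H$; for a face $\tau$ that meets both $D$ and $H$, its stellar point $b_\tau$ may sit deep in the relative interior of $\tau$, so these excluded simplices form a region reaching from $H$ far into the interior of $|C|$. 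Star-shapedness of $|C|$ only guarantees $[x,y]\subset |C|$, and nothing in your setup prevents such a segment from leaving $|N(D,C)|$, crossing this excluded region, and re-entering. The uniqueness hypothesis alone does not create the ``controlled wall'' you invoke; your final paragraph is a plan for an argument, not an argument, and it is precisely the step where the work lies.

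The paper's proof handles this by \emph{not} using a fixed realization: it builds a one-parameter family $N_r(D,C)$, $r\in(R,\nicefrac{\pi}{2}]$, in which the vertex of each face $\tau$ meeting both $D$ and $H$ is a point $w(\tau,r)\in B_r(m)\cap\tau$ that tends, as $r\to\nicefrac{\pi}{2}$, to a chosen point $v_{\sigma(\tau)}$ in the relative interior of $\sigma(\tau)=\tau\cap H$ (here the uniqueness hypothesis keeps this limit realization from degenerating, since distinct outer vertices are sent to distinct faces of $H$). At $r=\nicefrac{\pi}{2}$ the outer, ``extremal'' faces lie in $\partial B_{\nicefrac{\pi}{2}}$, so no geodesic through $x$ can connect two of them inside $\overline{H}_+$ (no ``folded pair''); by continuity and compactness the same holds for $r$ close enough to $\nicefrac{\pi}{2}$, and that is what forces every segment $[x,y]$ to stay inside $N_r(D,C)$. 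So the missing idea in your proposal is the deformation of the realization pushing the outer vertices toward the equator, together with the limiting/continuity argument; to complete your proof you would either have to import that construction or supply a genuinely new argument for why your fixed realization cannot be crossed by star rays, which you have not done.
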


\begin{proof}
Let $m$ be the midpoint of $\overline{H}_+$. Let $B_r (m)$ be the closed metric ball in $\overline{H}_+$ with midpoint $m$ and radius $r$ (with respect to the canonical space form metric $\mathrm{d}$ on $S^d$). If  $C\subset \intx \overline{H}_+$, then $C$ has a realization as a star-shaped set in $\R^d$ by central projection, and we are done. Thus, we can assume that $C$ intersects $H$. Let $x$ be a star-center of~$C$ in the interior of $\overline{H}_+$. Since $D$ and $\{x\}$ are compact and in the interior of $\overline{H}_+$, there is some real number $R < \nicefrac{\pi}{2}$ such that the ball $B_R(m)$ contains both $x$ and $D$. Let $J := (R, \nicefrac{\pi}{2}]$.

If $\sigma$ is any face of $C$ in $H$, let $v_\sigma$ be any point in the relative interior of $\sigma$. If $\tau$ is any face of $C$ intersecting~$D$ and $H$, define $\sigma(\tau):= \tau\cap H$. For each~$r\in J$, choose a point $w(\tau, r)$ in the relative interior of $B_r(m) \cap \tau$, so that for each $\tau$, $w(\tau, r)$ depends continuously on $r$ and tends to $v_{\sigma(\tau)}$ as $r$ approaches $\nicefrac{\pi}{2}$. Extend each family $w(\tau, r)$ continuously to $r=\nicefrac{\pi}{2}$ by defining $w(\tau, \nicefrac{\pi}{2}):=v_{\sigma(\tau)}$.

Next, we use these one-parameter families of points to produce a one-parameter family $N_r(D,C)$ of geometric realizations of $N(D,C)$, where $r\in (R, \nicefrac{\pi}{2}]$. For this, let $\varrho$ be any face of $C$ intersecting $D$. If $\varrho$ is in~$D$, let $x_{\varrho,r}$ be any point in the relative interior of $\varrho$ (independent of~$r$). If $\varrho$ is not in $D$, let $x_{\varrho,r}:=w(\varrho, r)$. We realize $N_r(D,C)\cong N(D,C)$, for $r$ in $J$, such that the vertex of $N_r(D,C)$ corresponding to the face $\varrho$ is~given the coordinates of $x_{\varrho,r}$. As the coordinates of the vertices determine a simplex entirely, this gives a realization $N_r(D,C)$ of $N(D,C)$, as desired.

To finish the proof, we claim that if $r$ is close enough to $\nicefrac{\pi}{2}$, then $N_r(D,C)$ is star-shaped with star-center $x$. Let us prove the claim. First we define the \emph{extremal faces} of $N_r(D,C)$ as the faces all of whose vertices are in~$\partial B_r(m)$. We say that a pair $\sigma,\ \sigma'$ of extremal faces is \emph{folded} if there are two points $a$ and $b$, in $\sigma$ and $\sigma'$ respectively, and satisfying $\mathrm{d}(a,b)< \pi$, such that the closed geodesic through $a$ and $b$ contains $x$, but the segment $[a, b]$ is not contained in~$N_r(D,C)$. 

When $r=\nicefrac{\pi}{2}$, folded faces do not exist since for every pair of points $a$ and $b$ in extremal faces, $\mathrm{d}(a,b)<\pi$, the geodesic through $a$ and $b$ lies in $\partial B_{\nicefrac{\pi}{2}}$; hence, all such subspaces have distance at least $\mathrm{d}(x,H)>0$ to $x$. 

Thus, since we chose the vertices of $N_r(D,C)$ to depend continuously on $r\in (R, \tfrac{\pi}{2}]$, we can find a real number $R'\in(R, \nicefrac{\pi}{2})$ such that for any $r$ in the open interval $J' := \left(R',\nicefrac{\pi}{2} \right),$ the simplicial complex $N_r(D,C)$ contains no folded pair of faces. 

But then, for every $r\in J'$, $N_r(D,C)$ folded pairs of extremal faces are avoided. Hence, for every $y$ in $N_r(D,C)$, the segment $[x,y]$ lies entirely in $N_r(D,C)$, since every part of the segment not in $N_r(D,C)$ must have as boundary points in a folded extremal pair.
\end{proof}

Recall that, if $(S, \prec)$ is an arbitrary poset and $S \subset T$, an \emph{extension} of $\prec$ to $T$ is any partial order $\widetilde{\prec}$ that coincides with $\prec$ when restricted to (pairs of elements in) $S$.

\begin{definition}[Derived order]\label{def:extord}
Let $C$ be a polytopal complex. Let $S$ denote a subset of mutually disjoint faces of $C$. We extend this order to an irreflexive partial order $\widetilde{\prec}$ on $C$ as follows: Let $\sigma$ be any face of $C$, and let $\tau\subsetneq \sigma$ be any strict face of $\sigma$. 
\begin{compactitem}[$\bullet$]
\item If $\tau$ is the minimal face of $\sigma$ under $\prec$, then $\tau\,\widetilde{\prec}\, \sigma$.
\item If $\tau$ is any other face of $\sigma$, then $\sigma\, \widetilde{\prec}\, \tau$.
\end{compactitem}
The transitive closure of the relation $\widetilde{\prec}$ gives an irreflexive partial order on the faces of~$C$. 

Note that by the correspondence of faces of $C$ to the vertices of $\sd  C$, it gives an irreflexive partial order on $\F_0(\sd  C)$. Any total order that extends the latter order is a \emph{derived order} of $\F_0(\sd  C)$ induced by $\prec$.
\end{definition}

\begin{definition}[$H$-splitting derived subdivisions]
Let $C$ be a polytopal complex in $\mathbb{R}^d$, and let $H$ be a hyperplane of $\mathbb{R}^d$. An \emph{$H$-splitting derived subdivision} of $C$ is a derived subdivision, with vertices chosen so that the following property holds: for any face $\tau$ of $C$ that intersects the hyperplane $H$ in the relative interior, the vertex of $\sd C$ that corresponds to $\tau$ in $C$ lies on the hyperplane $H$.
\end{definition}

\begin{figure}[htb]
\centering
\includegraphics[width=0.6\linewidth]{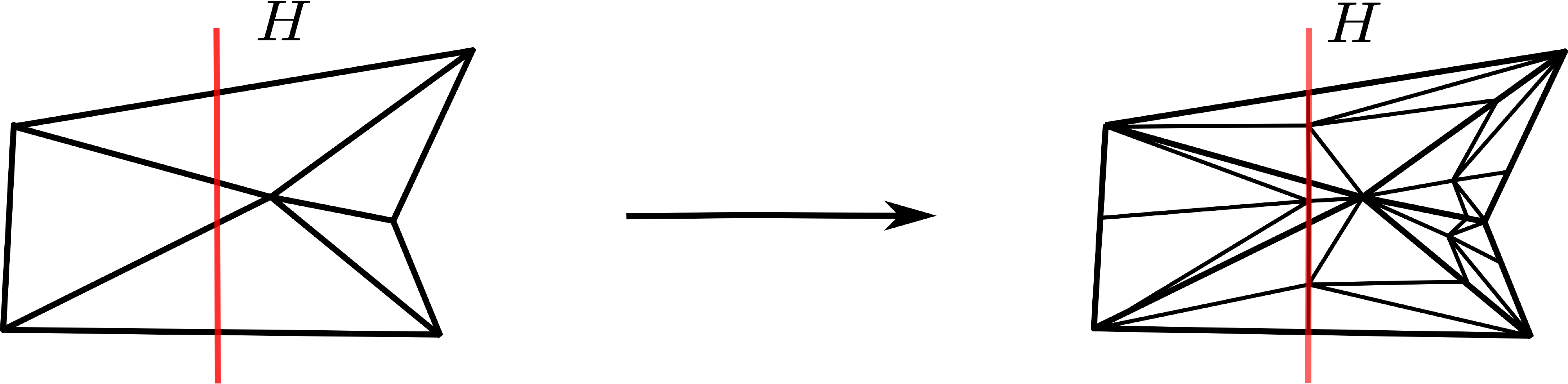}
\caption{An example of an $H$-splitting derived subdivision. (As we are free to choose where to position the new vertices, whenever possible we place them on the red hyperplane $H$.)}
\label{fig:split}
\end{figure}

\begin{definition}[Split link and Lower link]\label{def:slk}
Let $C$ be a simplicial complex in $\R^d$. Let $v$ be a vertex of $C$, and let $\overline{H}_+$ be a closed halfspace in $\R^d$ with outer normal $\nu$ at $v\in H:=\partial \overline{H}_+$. The \emph{split link} (of $C$ at $v$ with respect to $\nu$), denoted by $\SLk (v, C)$, is the intersection of $\Lk (v,  C)$ with the hemisphere $\TT_v^1 \overline{H}_+$, that is, \[\SLk (v, C):= \{\sigma \cap \TT_v^1 \overline{H}_+: \sigma \in \Lk(v,C)\}.\]
The \emph{lower link} $\LLk (v,C)$ of~$C$ at $v$ with respect to the direction $\nu$ is the restriction $\RS(\Lk(v,C), \intx \TT_v^1 \overline{H}_+)$ of $\Lk(v,C)$ to $\intx \TT_v^1 \overline{H}_+$. 

The complex $\LLk (v, C)$ is naturally a subcomplex of $\SLk (v,C)$: we have as an alternative definition of the lower link the identity \[\LLk (v,C)=\RS(\SLk(v,C), \intx \TT_v^1 \overline{H}_+).\]
\end{definition}

Finally, for a polytopal complex $C$ and a face $\tau$, we denote by $\mathrm{L}(\tau,C)$ the set of faces of $C$ strictly containing $\tau$.

\begin{theorem}\label{thm:ConvexEndo}
Let $C$ be a polytopal complex in $\R^d$, $d\geq 3$, or a simplicial complex in $\R^2$. If $C$ is star-shaped in $\R^d$, then $\mathrm{sd}^{d-2}  (C) $ is non-evasive, and in particular collapsible. 
\end{theorem}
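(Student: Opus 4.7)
I would argue by induction on $d$. The base case $d=2$ admits a direct argument: for a star-shaped simplicial complex $C\subset\R^2$ with star-center $x$, iteratively delete a vertex $v$ on the boundary of $\conv|C|$ with $v\ne x$; the link $\Lk(v,C)$ is a path (hence non-evasive) and $C-v$ remains star-shaped from $x$, so induction on the number of vertices concludes.

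For the inductive step ($d\ge 3$), fix a star-center $x$ of $|C|$ and a generic linear functional $\nu$ on $\R^d$ with $\nu(x)<\nu(v)$ for every vertex $v$ of $C$, and distinguishing all vertices of a first $H$-splitting derived subdivision $\sd C$, where $H=\{\nu=\nu(x)\}$. I would process the vertices $v_1,v_2,\dots$ of $\sd C$ in strictly decreasing order of $\nu$, applying Lemma~\ref{lem:cone} at each step to produce
\[\mathrm{sd}^{d-3}(\sd C)\;\searrow_{\NE}\;\mathrm{sd}^{d-3}(\sd C-v_1)\;\searrow_{\NE}\;\mathrm{sd}^{d-3}(\sd C-\{v_1,v_2\})\;\searrow_{\NE}\;\cdots,\]
ultimately reducing $\mathrm{sd}^{d-2}C=\mathrm{sd}^{d-3}(\sd C)$ to a single vertex. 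Since at step $i$ the already-deleted vertices are exactly those above $v_i$ in the $\nu$-order, the link of $v_i$ in $\sd C-\{v_1,\dots,v_{i-1}\}$ is the lower link $\LLk(v_i,\sd C)$, so it suffices to show that $\mathrm{sd}^{d-3}\LLk(v_i,\sd C)$ is non-evasive.

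The crux is the following claim: the lower link $L_{v_i}:=\LLk(v_i,\sd C)$ is a star-shaped spherical $(d-1)$-complex in the open lower hemisphere of $\TT^1_{v_i}\R^d$, with star-center the unit vector $u_i:=(x-v_i)/\|x-v_i\|$ (which lies in the open lower hemisphere since $\nu(x)<\nu(v_i)$). Indeed, for any neighbor $w$ of $v_i$ in $\sd C$ with $\nu(w)<\nu(v_i)$, the planar triangle $\conv(v_i,x,w)$ lies entirely in $|C|$: each point has the form $\lambda x+(1-\lambda)z$ with $z\in[v_i,w]\subset|C|$ and $[x,z]\subset|C|$ by star-shapedness. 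Consequently the spherical arc on $\TT^1_{v_i}\R^d$ from $u_i$ to the direction of $w$ lies in $\Lk(v_i,\sd C)$, and both endpoints having negative $\nu$-component keeps the arc strictly in the open lower hemisphere. Lemma~\ref{lem:star-shaped} applied to the split link $\SLk(v_i,\sd C)$ then yields a star-shaped realization in $\R^{d-1}$ of the derived neighborhood of $L_{v_i}$; the inductive hypothesis in dimension $d-1$, combined with collapses from this derived neighborhood onto $L_{v_i}$, gives the non-evasiveness of $\mathrm{sd}^{d-3}L_{v_i}$.

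The main obstacle is verifying the boundary hypothesis of Lemma~\ref{lem:star-shaped} for each split link $\SLk(v_i,\sd C)$---that every nonempty face on the equator of $\TT^1_{v_i}\R^d$ is a facet of a unique face of the split link meeting both the open lower hemisphere and its boundary. This is precisely what the $H$-splitting property of $\sd C$ is engineered to provide; the compatibility of a single ambient splitting hyperplane $H\subset\R^d$ with the many local equators $\{y:\nu(y)=\nu(v_i)\}$ (which differ from $H$ unless $v_i=x$) is the chief technical difficulty, and will likely require either iterating the splitting construction per vertex or positioning the derived-subdivision vertices with care so that the splitting behaves compatibly at every $v_i$.
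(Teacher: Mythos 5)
Your inductive step rests on a setup that cannot be arranged: you require a generic $\nu$ with $\nu(x)<\nu(v)$ for \emph{every} vertex $v$ of $C$, but the star-center $x$ is a point of $|C|$, hence a convex combination of the vertices of some face of $C$, so some vertex always satisfies $\nu(v)\le\nu(x)$. This is not a harmless normalization, because your whole sweep needs the direction $u_i=(x-v_i)/\|x-v_i\|$ to lie in the \emph{open lower} hemisphere at every vertex processed; for vertices at or below the level of $x$ this fails, the split link is no longer star-shaped about a point interior to the relevant hemisphere, and the sweep cannot terminate at a single vertex. The paper's proof is organized around exactly this obstruction: one sweeps from above only down to the hyperplane $H$ through $x$, symmetrically from below up to $H$, and then applies the inductive hypothesis to the $(d-1)$-dimensional star-shaped complex $\RS(\sd C,H)$, gluing the two halves via the observation that $A,B\searrow_{\NE}A\cap B$ implies $A\cup B\searrow_{\NE}A\cap B$. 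This is also the real purpose of the $H$-splitting subdivision: it guarantees $\RS(\sd C,\overline{H}_+)\cup\RS(\sd C,\overline{H}_-)=\sd C$ with intersection $\RS(\sd C,H)$ triangulating $|C|\cap H$. The ``compatibility of $H$ with the local equators'' that you flag as the chief difficulty is a misdiagnosis: the boundary hypothesis of Lemma~\ref{lem:star-shaped} at each vertex is satisfied simply because $\nu$ is generic (no edge orthogonal to $\nu$), so each equatorial face of the split link is a facet of exactly one halved face.

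There is a second, independent gap in the bookkeeping of which complexes your machinery certifies. Ordering the vertices of $\sd C$ by raw height does make the link of $v_i$ in the partially deleted complex equal to $\LLk(v_i,\sd C)$, but Lemma~\ref{lem:star-shaped} together with the inductive hypothesis only gives non-evasiveness of $\mathrm{sd}^{d-3}N(\LLk(v_i,\sd C),\SLk(v_i,\sd C))$, i.e.\ of a \emph{derived neighborhood}, not of the lower link itself; and non-evasiveness (or collapsibility) does not descend from a complex to a subcomplex onto which it collapses, so your phrase ``combined with collapses from this derived neighborhood onto $L_{v_i}$'' does not produce the non-evasiveness of $\mathrm{sd}^{d-3}L_{v_i}$ that Lemma~\ref{lem:cone} would need. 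The paper closes precisely this gap with the \emph{derived order} (Definition~\ref{def:extord}): vertices of $\sd C$ coming from positive-dimensional faces are deleted at a moment when their link is a cone over $\sd\Lk(\tau,C)$ (hence non-evasive for free), while an original vertex $v$ is deleted exactly when its link in the remaining complex is combinatorially $N(\LLk(v,C),\Lk(v,C))$ --- the very complex that Lemma~\ref{lem:star-shaped} realizes as a star-shaped set in $\R^{d-1}$. Without this ordering device, or a substitute proof that $\mathrm{sd}^{d-3}$ of a lower link is non-evasive (which is not available and is essentially as hard as the theorem), the inductive step does not go through. A minor further point: your $d=2$ base case is false as stated, since a star-shaped planar complex can have a vertex $v\neq x$ on $\partial\conv|C|$ whose link is an arc plus an isolated vertex (attach an edge $[v,w]$ collinear with $x$); the paper instead quotes the fact that every contractible planar complex is non-evasive.
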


\begin{proof}
The proof is by induction on the dimension. The case $d=2$ is easy: Every contractible planar simplicial complex is non-evasive \cite[Lemma 2.3]{AB-tight}. 

Assume now $d\ge 3$. Let $\nu$ be generic in $S^{d-1}\subset \R^{d}$, so that no edge of $C$ is orthogonal to~$\nu$. Let $H$ be a hyperplane through a star-center $x$ of $C$ such that $H$ is orthogonal to~$\nu$. Throughout this proof, let $\sd C$ denote any $H$-splitting derived subdivision of $C$. Let $\overline{H}_+$ (resp.~$\overline{H}_-$) be the closed halfspace bounded by $H$ in direction $\nu$ (resp.\ $-\nu$), and let $H_+$ (resp.\ $H_-$) denote their respective interiors. We make five claims: 
\begin{compactenum}[(1)]
\item For $v\in \F_0(\RS( C, {H}_+))$, the complex $\mathrm{sd}^{d-3}  N (\LLk (v, C),\Lk(v,C))$ is non-evasive. 
\item For $v\in \F_0(\RS( C, {H}_-))$, the complex $\mathrm{sd}^{d-3}  N (\mathrm{LLk}^{-\nu} (v, C), \Lk(v,C))$ is non-evasive. 
\item $\mathrm{sd}^{d-3}  \RS(\sd  C, \overline{H}_+) \; \searrow_{\NE} \; \mathrm{sd}^{d-3}  \RS( \sd C, H).$
\item $\mathrm{sd}^{d-3}  \RS(\sd  C, \overline{H}_-) \; \searrow_{\NE} \; \mathrm{sd}^{d-3}  \RS( \sd  C, H).$
\item $\mathrm{sd}^{d-3}  \RS( \sd  C, H)$ is non-evasive.
\end{compactenum}
Here are the respective proofs: 
\begin{compactenum}[(1)]
\item Let $v$ be a vertex of $C$ that lies in ${H}_+$. The complex $\SLk(v,C)$ is star-shaped in the $(d-1)$-sphere~$\TT^1_v \R^d$; its star-center is the tangent direction of the segment $[v,x]$ at $v$. Furthermore, $\nu$ is generic, so $\SLk(v,C)$ satisfies the assumptions of Lemma~\ref{lem:star-shaped}. The lemma tells us that the complex \[N(\LLk (v, C),\SLk(v,C))\cong N(\LLk (v, C),\Lk(v,C))\] has a star-shaped geometric realization in $\R^{d-1}$.  So by the inductive assumption, the simplicial complex $\mathrm{sd}^{d-3}  N (\LLk (v, C),\Lk(v,C))$ is non-evasive.

\item This is symmetric to (1).

\item Since $\nu$ is generic, the vertices of $C$ are totally ordered according to their value under the function $\langle \cdot, \nu  \rangle$. We adopt the convention that the first vertex is the one that maximizes the functional $\langle \cdot, \nu  \rangle$. 
Let us extend such order to a derived total order on the vertices of $\sd C$, as explained in Definition~\ref{def:extord}. 

Note that the order on the vertices of $\sd C$ \emph{does not} have to be induced by $\langle \cdot, \nu  \rangle$; it is however easy to arrange the vertices of $\sd C$ in such a way that both orders agree.

Let $v_0,v_1,\, \cdots, v_n$ be the vertices of $\RS(\sd  C, {H}_+)\subset \sd C$, labeled according to the derived order (so that the maximal vertex is $v_0$). Clearly, these vertices form an order filter (i.e.\ an upward closed subset) for the order on vertices of $\sd C$ defined above.

Define $C_i$ by restricting $\sd  C$ to $\overline{H}_+$, and deleting $\{v_0,\, \cdots, v_{i-1}\}$, i.e.\ 
\[C_i:=\RS(\sd  C, \overline{H}_+)-\{v_0,\, \cdots, v_{i-1}\}\] and 
define
\[\varSigma_i:=\mathrm{sd}^{d-3} C_i.\]
It remains to show that, for all $i$, $0\le i\le n$, we have $\varSigma_i\searrow_{\NE}\varSigma_{i+1}$. We distinguish two cases, according to whether $v_i$ was introduced when subdividing or not.
\begin{compactitem}[$\bullet$]
\item If $v_i$ corresponds to a face $\tau$ of $C$ of positive dimension, then let $w$ denote the vertex of $\tau$ minimizing~$\langle \cdot,\nu \rangle$. Following the definition of the derived order, the complex $\Lk(v_i, C_i)$ is combinatorially equivalent to the order complex associated to the set of faces $\mathrm{L}(\tau,C)\cup \{w\}$, whose elements are ordered by inclusion. 

Since $w$ is the unique minimum in that order, $\Lk(v_i, C_i)$ is combinatorially equivalent to a cone over base $\sd\Lk (\tau,C)$. But every cone is non-evasive (cf.~Lemma~\ref{lem:conev}). Thus, $C_i \searrow_{\NE} C_i - v_i=C_{i+1}$. By Lemma~\ref{lem:nonev}, $\varSigma_i \searrow_{\NE} \varSigma_{i+1}$.
\item If $v_i$ corresponds to an original vertex of $C$, we have by claim (1) that the $(d-3)$-rd derived subdivision of $\Lk (v_i,C_i)\cong N (\LLk (v_i, C),\Lk(v_i,C))$ is non-evasive. With Lemma~\ref{lem:cone}, we conclude that \[\varSigma_i = \mathrm{sd}^{d-3} C_i  \searrow_{\NE} \mathrm{sd}^{d-3} (C_i-v_i) = \mathrm{sd}^{d-3} C_{i+1} = \varSigma_{i+1}.\]
\end{compactitem}

Hence in both cases $\varSigma_i\searrow_{\NE}\varSigma_{i+1}$. This means that we can recursively delete one vertex, until the remaining complex has no vertex in ${H}_+$. Thus, the complex $\mathrm{sd}^{d-3} \RS(\sd C, \overline{H}_+)$ can be deformation retracted to $\mathrm{sd}^{d-3} \RS( \sd C, H)$ via non-evasiveness steps.
\item This is analogous to (3), exploiting claim (2) in place of claim (1).
\item This follows from the inductive assumption! In fact, $\RS(\sd C,H)$ is star-shaped in the $(d-1)$-dimensional hyperplane $H$: Hence, the inductive assumption gives that $\mathrm{sd}^{d-3}  \RS(\sd  C, H)$ is non-evasive.
\end{compactenum}
Once our five claims are established, we conclude by showing that claims (3), (4) and (5) imply that $\mathrm{sd}^{d-2}  C$ is non-evasive, as desired. First of all, we observe that if $A$, $B$ and $A\cup B$ are simplicial complexes with the property that $A , B \searrow_{\NE} A \cap B$, then $A\cup B \searrow_{\NE} A\cap B$. Applied to the complexes $\mathrm{sd}^{d-3}  \RS(\sd  C, \overline{H}_+)$, $\mathrm{sd}^{d-3}  \RS(\sd  C, \overline{H}_-)$ and $\mathrm{sd}^{d-2}  C=\mathrm{sd}^{d-3}  \RS(\sd  C, \overline{H}_+)\cup \mathrm{sd}^{d-3}  \RS(\sd  C, \overline{H}_-)$, the combination of (3) and (4) shows that $\mathrm{sd}^{d-2}  C   \searrow_{\NE} \mathrm{sd}^{d-3} \RS (\sd C,H) $, which in turn is non-evasive by (5).
\end{proof}

\section{Collapsibility of convex complexes}\label{ssc:convex}
As usual, we say that a polytopal complex $C$ (in $\R^d$ or in $S^d$) is \emph{convex} if its underlying space is convex. A hemisphere in $ S^d$ is in \emph{general position} with respect to a polytopal complex $C\in S^d$ if it contains no vertices of $C$ in the boundary. 

Since all convex complexes are star-shaped, the results of the previous section immediately imply that Lickorish's conjecture and Hudson's problem admit positive answer up to taking $d-2$ derived subdivisions. In a companion paper \cite{AB-Shellability}, we proved that one can do better: up to taking at most \emph{two} barycentric subdivisions, every convex complex is shellable, hence in particular collapsible.

In this section, we improve the previous results even further, by establishing that for Lickorish and Hudson's problems a positive answer can be reached after only \emph{one} derived subdivision (Theorems~\ref{thm:hudson} and~\ref{thm:liccon}). For this, we rely on the following Theorem~\ref{thm:ConvexEndo2}, the proof of which is very similar to the proof of Theorem~\ref{thm:ConvexEndo}.

\begin{theorem}\label{thm:ConvexEndo2}
Let $C$ be a convex polytopal $d$-complex in $S^d$ and let $\overline{H}_+$ be a closed hemisphere of $S^d$ in general position with respect to $C$. Then we have the following:
\begin{compactenum}[\rm (A)]
\item If $\partial C\cap \overline{H}_+=\emptyset$, then $N(\RS(C,\overline{H}_+),C)$ is collapsible.
\item If $\partial C\cap \overline{H}_+$ is nonempty, and $C$ does not lie in $\overline{H}_+$, then $N(\RS(C,\overline{H}_+),C)$ collapses to the subcomplex $N(\RS(\partial C,\overline{H}_+),\partial C)$.
\item If $C$ lies in $\overline{H}_+$, then there exists some facet $F$ of $\sd \partial C$ such that $\sd C$ collapses to $C_F:=\sd \partial C-F$.
\end{compactenum}
\end{theorem}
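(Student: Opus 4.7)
The plan is to prove Theorem~\ref{thm:ConvexEndo2} by a joint induction on $d$ over statements (A), (B), (C). The base case $d=1$ is a direct verification: a convex $1$-complex in $S^1$ is an arc (or the full circle), and each conclusion follows by elementary inspection. The architecture of the inductive step mirrors that of Theorem~\ref{thm:ConvexEndo}, with two essential changes dictated by the sharper statement: one tracks \emph{collapsibility} in place of non-evasiveness, and one exploits the rigidity of genuine convexity, not merely star-shapedness, so that a single derived subdivision suffices.

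For the inductive step, I would fix a generic unit direction $\nu\in S^d$ so that no edge of $C$ is orthogonal to $\nu$ and so that $H:=\partial\overline{H}_+$ is in general position with respect to $C$; then adopt an $H$-splitting derived subdivision $\sd C$ and extend the resulting height order on vertices of $C$ to a derived total order on $\F_0(\sd C)$ as in Definition~\ref{def:extord}. The heart of the argument is to delete the vertices of $\sd C$ in decreasing derived order in the pertinent region (outside $\overline{H}_+$ for (A) and (B), the whole of $C$ for (C)). At each deletion, the link of the removed vertex is of one of two types. If the vertex arose from subdividing a positive-dimensional face $\tau$ of $C$, the link is the order complex of $\mathrm{L}(\tau,C)\cup\{\text{minimum subface of }\tau\}$, which is a cone, hence collapsible by Lemmas~\ref{lem:conev} and~\ref{lem:cecoll}. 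If the vertex is an original vertex $v\in C$, the link is combinatorially isomorphic to $N(\LLk(v,C),\Lk(v,C))$, a derived neighborhood of a convex subcomplex of the convex spherical $(d-1)$-complex $\Lk(v,C)\subset S^{d-1}$; here the inductive hypothesis, applied in the appropriate case (A), (B), or (C) at dimension $d-1$, yields collapsibility. A spherical counterpart of Lemma~\ref{lem:star-shaped} is needed to certify that $N(\LLk(v,C),\Lk(v,C))$ indeed has the form required by the inductive cases.

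The three cases differ in how the deletion terminates. For (A), all vertices of $N(\RS(C,\overline{H}_+),C)\setminus \sd\RS(C,\overline{H}_+)$ are removed, leaving $\sd\RS(C,\overline{H}_+)$, which then collapses to a point by an application of (C) at dimension $d$ (established earlier within the same inductive step) followed by further collapses inside the resulting boundary ball. For (B) the deletion halts precisely when the residue equals $N(\RS(\partial C,\overline{H}_+),\partial C)$, the collapses in the interior exactly mirroring those in (A) but stopping one layer earlier. For (C), a line shelling in direction $\nu$ drives the deletions; the facet $F$ of $\sd\partial C$ singled out corresponds to the maximal chain of faces of $\partial C$ terminating at the lowest vertex with respect to $\nu$.

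The main obstacle I anticipate is arranging the joint induction so that (A), (B), and (C) at a given dimension $d$ support each other without circularity—in particular, the reduction inside case (A) to case (C) at the same dimension must be carefully ordered, so that (C) is proved first within the step and does not itself invoke (A) or (B) at level $d$. A secondary, more technical hurdle is the spherical analog of Lemma~\ref{lem:star-shaped} required to interpret the neighborhoods $N(\LLk(v,C),\Lk(v,C))$ as convex spherical complexes amenable to the inductive hypothesis, which is the source of the extra sharpness over Theorem~\ref{thm:ConvexEndo}.
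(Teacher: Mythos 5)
Your overall strategy (induction on $d$, deleting vertices of a derived subdivision along a derived order, links being either cones or derived neighborhoods of lower links handled by the inductive hypothesis) is indeed the skeleton of the paper's argument, but two of your concrete steps fail. The most serious is your termination of case (A): you propose to strip $N(\RS(C,\overline{H}_+),C)$ down to $\sd \RS(C,\overline{H}_+)$ and then finish by applying (C) at dimension $d$. But $\RS(C,\overline{H}_+)$ is merely the maximal subcomplex of $C$ lying in $\overline{H}_+$; its underlying space is a jagged inner approximation of $|C|\cap\overline{H}_+$ and is in general \emph{not} convex, so (C) does not apply to it. Even if it did, (C) only produces a collapse onto $\sd\partial(\cdot)$ minus a facet, not onto a point, and your ``further collapses inside the resulting boundary ball'' is precisely the kind of claim the whole theorem is struggling to establish (subdivided balls are not known to be collapsible). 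The paper never invokes (C) at the same dimension: in (A) the ordered deletion removes \emph{all} vertices of $N(\RS(C,\overline{H}_+),C)$, each link being either a cone or a complex of the form $N(\mathrm{LLk}_m(\tau,C),\Lk_m(\tau,C))$ handled by (A)$_{d-1}$ or (B)$_{d-1}$, and the process ends at a single vertex. Relatedly, in (B) your plan of ``mirroring (A) but stopping one layer earlier'' cannot mean collapsing to $\sd\RS(C,\overline{H}_+)$, since the target $N(\RS(\partial C,\overline{H}_+),\partial C)$ is not contained in it; the paper has to carry $\varSigma_i=C_i\cup N(\RS(\partial C,\overline{H}_+),\partial C)$ along and glue in the boundary collapses via Lemma~\ref{lem:uc}.

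The second gap is the choice of order in (A)/(B). A generic linear direction $\nu$ with an $H$-splitting subdivision is the device of Theorem~\ref{thm:ConvexEndo}; here the order must be tied to the \emph{given} hemisphere. The paper applies a generic projective transformation preserving $\overline{H}_+$ so that its midpoint $x$ is a generic star-center of $\overline{H}_+{\setminus}C$, orders the faces in $\mathrm{M}(C,\overline{H}_+)$ (those attaining their minimal distance to $x$ in their relative interior) by that distance, and takes a derived order. This is exactly what makes the link at each such vertex a derived neighborhood of the restriction of $\Lk_m(\tau,C)$ to the hemisphere $\RN^1_{(m,\tau)}B_m$, in general position, so (A)$_{d-1}$/(B)$_{d-1}$ apply verbatim; in particular no spherical analogue of Lemma~\ref{lem:star-shaped} is needed --- your anticipated ``secondary hurdle'' disappears once the order is chosen this way, whereas with an unrelated $\nu$ the intermediate complexes and links have no reason to fit the inductive statements or to terminate at $N(\RS(\partial C,\overline{H}_+),\partial C)$. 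Your sketch of (C) is closer to the mark, but the mechanism is a Morse-type vertex deletion, not a line shelling, and the spared facet $F$ arises at the \emph{top} vertex $v_0$ (as $F=v_0\ast F'$ with $F'$ given by (C)$_{d-1}$ applied to $\Lk(v_0,C)$), after which one deletes the remaining vertices keeping $C_F$, using (A)$_{d-1}$ at interior vertices, (B)$_{d-1}$ at boundary vertices, and Lemma~\ref{lem:uc} throughout.
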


Part (C) of Theorem~\ref{thm:ConvexEndo2} can be equivalently rephrased as follows: ``If $C$ lies in $\overline{H}_+$, for \emph{any} facet $\sigma$ of $\sd \partial C$ the complex $\sd C$ collapses to $C_\sigma :=\sd \partial C- \sigma$''. In fact, for any $d$-dimensional simplicial  ball $B$, the following statements are equivalent \cite[Prp.\ 2.4]{BZ}: 
\begin{compactenum}[(i)]
\item $B\searrow \partial B- \sigma$ for some facet $\sigma$ of $\partial B$;
\item $B- F \searrow \partial B$ for any facet $F$ of $B$.
\end{compactenum}
\noindent Thus, we have the following corollary:

\begin{cor}\label{cor:ConvexEndo2}
Let $C$ be a convex polytopal complex in $\R^d$.  Then for any facet $\sigma$ of $C$ we have that $\sd C-\sigma$ collapses to $\sd \partial C$.
\end{cor}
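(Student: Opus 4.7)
The plan is to derive the corollary by applying Theorem~\ref{thm:ConvexEndo2}(C) to both $C$ and the facet $\sigma$ (viewed as a convex $d$-polytope), and by combining the resulting collapses via the Benedetti--Ziegler equivalence (i)$\Leftrightarrow$(ii) together with Lemma~\ref{lem:uc}. First, realizing $C$ in an open hemisphere of $S^d$ via central projection, Theorem~\ref{thm:ConvexEndo2}(C) yields $\sd C \searrow \sd \partial C - G$ for some facet $G$ of $\sd \partial C$, which the equivalence upgrades to $\sd C - F \searrow \sd \partial C$ for \emph{every} facet $F$ of $\sd C$. Applying the same pair of results to $\sigma$ gives $\sd \sigma - F \searrow \sd \partial \sigma$ for every facet $F$ of $\sd \sigma$.

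Next I would fix a facet $F$ of $\sd \sigma$ (hence also a facet of $\sd C$). The elementary collapses realizing $\sd \sigma - F \searrow \sd \partial \sigma$ only remove faces strictly interior to $\sigma$; since any face of $\sd C$ whose interior lies in $\rint \sigma$ is entirely contained in $\sd \sigma$, these collapses carry over verbatim inside $\sd C - F$, producing $\sd C - F \searrow \sd C - \sigma$. This step can be formalized as an application of Lemma~\ref{lem:uc} with $D = \sd C - \sigma$ and the collapsing complex $\sd \sigma - F$, noting that $D \cup (\sd \sigma - F) = \sd C - F$ and $D \cap (\sd \sigma - F) = \sd \partial \sigma$.

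To obtain the desired collapse $\sd C - \sigma \searrow \sd \partial C$, the plan is to rearrange the collapse $\sd C - F \searrow \sd \partial C$ so that its initial segment realizes the intermediate state $\sd C - \sigma$, after which the remaining collapses descend to $\sd \partial C$. The locality principle driving this is that any elementary collapse internal to $\sd \sigma$ (one whose free face has interior in $\rint \sigma$) interacts with faces outside $\sigma$ only through the preserved subcomplex $\sd \partial \sigma$, and so commutes with collapses strictly outside $\sd \sigma$.

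The main obstacle will be to make the rearrangement rigorous in the presence of ``mixed'' elementary collapses whose free face lies on $\sd \partial \sigma$ but whose containing face lies strictly interior to $\sigma$. My strategy for handling these is to avoid them altogether in the initial phase: rather than extracting a subsequence of the given collapse $\sd C - F \searrow \sd \partial C$, I would use the explicit sequence from $\sd \sigma - F \searrow \sd \partial \sigma$ (which uses only strictly-interior free faces) to reach $\sd C - \sigma$, and then argue that the $(d{-}1)$-faces of $\sd \partial \sigma$ lying in the interior of $|C|$ have become free faces of $\sd C - \sigma$ --- each having lost its unique inside-$\sigma$ neighbor --- and so power an inward collapse of $\sd C - \sigma$ down to $\sd \partial C$ by the same kind of induction on vertices used in the proof of Theorem~\ref{thm:ConvexEndo2}.
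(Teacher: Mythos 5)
Your first paragraph is exactly the paper's own proof: realize $C$ in an open hemisphere of $S^d$ by central projection, apply Theorem~\ref{thm:ConvexEndo2}(C) to get $\sd C\searrow \sd\partial C-G$ for some facet $G$ of $\sd\partial C$, and invoke the equivalence of \cite[Prp.~2.4]{BZ} for the simplicial $d$-ball $\sd C$ to conclude $\sd C-F\searrow \sd\partial C$ for every facet $F$ of $\sd C$. This is also the form in which the corollary is actually used later (in the proof of Theorem~\ref{thm:hudson} the deleted faces are facets of the subdivided complexes $\RS(\sd D,\varSigma)$ and $\RS(\sd D,\sigma)$), so at the end of your first paragraph you are already done with the statement as the paper intends and applies it; the phrase ``facet of $C$'' is to be read as a facet of $\sd C$.

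The rest of your proposal pursues the literal stronger reading in which $\sigma$ is a cell of $C$ and $\sd C-\sigma$ deletes every face of $\sd C$ meeting $\rint\sigma$. Your intermediate step is fine: Lemma~\ref{lem:uc} with $D=\sd C-\sigma$ and the collapse $\sd\sigma-F\searrow\sd\partial\sigma$ does yield $\sd C-F\searrow \sd C-\sigma$. But the decisive last step is a genuine gap. From $\sd C-F\searrow\sd\partial C$ and $\sd C-F\searrow \sd C-\sigma$ nothing follows about $\sd C-\sigma\searrow\sd\partial C$: collapsing sequences cannot in general be reordered so as to pass through a prescribed intermediate subcomplex, and the ``locality/commutation'' principle you invoke is precisely what breaks down at the mixed collapses you yourself flag. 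Your fallback does not close it either: it is true that each $(d-1)$-simplex of $\sd\partial\sigma$ interior to $|C|$ becomes a free face of $\sd C-\sigma$, but having free faces available never by itself guarantees that a collapse can be continued all the way down to a prescribed subcomplex such as $\sd\partial C$ --- that is the central difficulty of the whole subject. Appealing to ``the same kind of induction on vertices used in the proof of Theorem~\ref{thm:ConvexEndo2}'' is an assertion, not an argument: that induction establishes statements (A)--(C) about derived neighborhoods of restrictions to hemispheres, none of which is $\sd C-\sigma\searrow\sd\partial C$, and adapting it would mean redoing the vertex-removal induction while tracking the extra boundary sphere $\sd\partial\sigma$ (note that for an interior facet $\sigma$ the complex $\sd C-\sigma$ is a manifold with two boundary components, and you must collapse onto only one of them). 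So either stop after your first paragraph, which is the paper's proof, or actually carry out that additional induction.
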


\begin{proof}[\textbf{Proof of Theorem~\ref{thm:ConvexEndo2}}]
Claim (A), (B) and (C) can be proved analogously to the proof of Theorem~\ref{thm:ConvexEndo}, by induction on the dimension. Let us adopt convenient shortenings. We denote: 
        \begin{compactitem}[--]
\item by $\io_d$, the statement ``(A) is true for complexes of dimension $\le d$'';
\item by $\iit_d$, the claim ``(B) is true for complexes of dimension $\le d$''; and finally 
\item by $\iii_d$, the claim ``(C) is true for complexes of dimension $\le d$''. 
\end{compactitem}
Clearly $\io_0$, $\iit_0$, and $\iii_0$ are true. We assume, from now on, that $d>0$ and that $\io_{d-1}$, $\iit_{d-1}$ and $\iii_{d-1}$ are proven already. We then proceed to prove $\io_{d}$, $\iit_{d}$ and~$\iii_{d}$. Recall that $\mathrm{L}(\tau,C)$ denotes the faces of $C$ strictly containing a face $\tau$ of $C$. We will make use of the notions of derived order (Definition~\ref{def:extord}) and lower link (Definition~\ref{def:slk}) from the previous section.

\smallskip

\noindent \textbf{Proving} $\io_{d}$ \textbf{and} $\iit_{d}$: Let $x$ denote the midpoint  of $\overline{H}_+$, and let $\mathrm{d}(y)$ denote the distance of a point $y \in S^d$ to $x$ with respect to the canonical metric on $S^d$. 

If $\partial C\cap \overline{H}_+\neq \emptyset$, then $C$ is a polyhedron that intersects $S^d{{\setminus}}\overline{H}_+$ in its interior since $\overline{H}_+$ is in general position with respect to $C$. Thus, $\overline{H}_+{{\setminus}}C$ is star-shaped, and for every $p$ in $\intx C{{\setminus}}\overline{H}_+$, the point $-p\in \intx \overline{H}_+$ is a star-center for it. In particular, the set of star-centers of $\overline{H}_+{{\setminus}}C$ is open. Up to a generic projective transformation $\varphi$ of $S^d$ that takes $\overline{H}_+$ to itself, we shall assume that $x$ is a generic star-center of~$\overline{H}_+{{\setminus}} C$.

Let $\mathrm{M}(C, \overline{H}_+)$ denote the faces $\sigma$ of $\RS(C,\overline{H}_+)$ for which the function ${\min}_{y\in \sigma} \mathrm{d}(y)$ attains its minimum in the relative interior of $\sigma$. With this, we order the elements of $\mathrm{M}(C, \overline{H}_+)$ strictly by defining $\sigma\prec \sigma'$ whenever $\min_{y\in\sigma}\mathrm{d}(y)<\min_{y\in\sigma'}\mathrm{d}(y)$.

This allows us to induce an associated derived order on the vertices of $\sd C$, which we restrict to the vertices of $N(\RS(C,\overline{H}_+),C)$. Let $v_0, v_1, v_2, \, \cdots, v_n$ denote the vertices of $N(\RS(C,\overline{H}_+),C)$ labeled according to the latter order, starting with the maximal element $v_0$. Let $C_i$ denote the complex $N(\RS(C,\overline{H}_+),C)-\{v_0, v_1, \, \cdots, v_{i-1}\},$ and define 
\[\Sigma_i:=C_i\cup N(\RS(\partial C,\overline{H}_+),\partial C).\] We will prove that $\Sigma_i\searrow \Sigma_{i+1}$ for all $i$, $0\le i\le n-1$; from this $\io_{d}$ and $\iit_{d}$ will follow. There are four cases to consider here.

\begin{compactenum}[(1)]
\item $v_i$ is in the interior of $\sd C$ and corresponds to an element of $\mathrm{M}(C, \overline{H}_+)$.
\item $v_i$ is in the interior of $\sd C$ and corresponds to a face of $C$ not in $\mathrm{M}(C, \overline{H}_+)$.
\item $v_i$ is in the boundary of $\sd C$ and corresponds to an element of $\mathrm{M}(C, \overline{H}_+)$.
\item $v_i$ is in the boundary of $\sd C$ and corresponds to a face of $C$ not in $\mathrm{M}(C, \overline{H}_+)$.
\end{compactenum}
We need some notation to prove these four cases. Recall that we can define $\RN$, $\RN^1$ and $\Lk$ with respect to a basepoint; we shall need this notation in cases (1) and (3). Furthermore, let us denote by $\tau$ the face of $C$ corresponding to $v_i$ in $\sd C$, and let $m$ denote the point ${\arg\min}_{y\in \tau} \mathrm{d}(y)$. Finally, define the ball $B_m$ as the set of points $y$ in $S^d$ with~$\mathrm{d}(y)\le\mathrm{d}(m)$. 

\smallskip
\noindent \emph{Case $(1)$}:  The complex $\Lk(v_i,\varSigma_i)$ is combinatorially equivalent to $N(\mathrm{LLk}_m(\tau,C),\Lk_m(\tau,C))$, where \[\mathrm{LLk}_m(\tau,C):= \RS(\Lk_m(\tau,C),  \RN^1_{(m,\tau)} B_m)\] is the restriction of $\Lk_m(\tau,C)$ to the hemisphere $ \RN^1_{(m,\tau)} B_m$ of $\RN^1_{(m,\tau)} S^d$.  Since the projective transformation $\varphi$ was chosen to be generic, $\RN^1_{(m,\tau)} B_m$ is in general position with respect to $\Lk_m(\tau,C)$. Hence, by assumption $\io_{d-1}$, the complex \[N(\mathrm{LLk}_m(\tau,C),\Lk_m(\tau,C))\cong \Lk (v_i,\varSigma_i)\] is collapsible. Consequently,  Lemma~\ref{lem:cecoll} proves $\varSigma_i\searrow \varSigma_{i+1}=\varSigma_i-v_i.$

\smallskip
\noindent \emph{Case $(2)$}: If $\tau$ is not an element of $\mathrm{M}(C, \overline{H}_+)$, let $\sigma$ denote the face of $\tau$ containing $m$ in its relative interior. Then, $\Lk(v_i, \varSigma_i)=\Lk(v_i, C_i)$ is combinatorially equivalent to the order complex of the union $\mathrm{L}(\tau,C)\cup \sigma$, whose elements are ordered by inclusion. Since $\sigma$ is a unique global minimum of the poset, the complex $\Lk(v_i, \varSigma_i)$ is a cone, and in fact combinatorially equivalent to a cone over base $\sd \Lk(\tau, C)$. But all cones are non-evasive (Lemma~\ref{lem:ccoll}), so $\Lk(v_i, \varSigma_i)$ is collapsible. Consequently, Lemma~\ref{lem:cecoll} gives $\varSigma_i\searrow \varSigma_{i+1}=\varSigma_i-v_i$.

\smallskip 

%
\smallskip
\noindent \emph{Case $(3)$}: This time, $v_i$ is in the boundary of $\sd C$. As in case {(1)}, $\Lk (v_i,C_i)$ is combinatorially equivalent to the complex \[N(\mathrm{LLk}_m(\tau,C),\Lk_m(\tau,C)),\  \  \mathrm{LLk}_m(\tau,C):= \RS(\Lk_m(\tau,C),  \RN^1_{(m,\tau)} B_m)\] in the sphere $\RN^1_{(m,\tau)} S^d$. Recall that $\overline{H}_+{{\setminus}}C$ is star-shaped with star-center $x$ and that $\tau$ is not the face of $C$ that minimizes $\mathrm{d}(y)$ since $v_i\neq v_n$, so that $\RN^1_{(m,\tau)} B_m\cap \RN^1_{(m,\tau)} \partial C$ is nonempty. Since furthermore $\RN^1_{(m,\tau)} B_m$ is a hemisphere in general position with respect to the complex $\Lk_m(\tau,C)$ in the sphere $\RN^1_{(m,\tau)} S^d$, the inductive assumption $\iit_{d-1}$ applies: The complex $N(\mathrm{LLk}_m(\tau,C),\Lk_m(\tau,C))$ collapses to 
\[N(\mathrm{LLk}_m(\tau,\partial C),\Lk_m(\tau,\partial C))\cong \Lk (v_i,C'_i),\ \  C'_i:=C_{i+1} \cup (C_i\cap N(\RS(\partial C,\overline{H}_+),\partial C)).\]
Consequently, Lemma~\ref{lem:cecoll} proves that $C_i$ collapses to $C'_i$. Since \[ \varSigma_{i+1}\cap C_i =(C_{i+1}\cup N(\RS(\partial C,\overline{H}_+),\partial C))\cap C_i=  C_{i+1} \cup (C_i\cap N(\RS(\partial C,\overline{H}_+),\partial C))= C'_i\]
Lemma~\ref{lem:uc}, applied to the union $\varSigma_i=C_i\cup \varSigma_{i+1}$ of complexes $C_i$ and $\varSigma_{i+1}$ gives that $\varSigma_i$ collapses onto~$\varSigma_{i+1}.$

\smallskip
\noindent \emph{Case $(4)$}: As observed in case {(2)}, the complex $\Lk(v_i,C_i)$ is  combinatorially equivalent to a cone over base $\sd \Lk(\tau, C)$, which collapses to the cone over the subcomplex $ \sd  \Lk(\tau, \partial C)$ by Lemma~\ref{lem:ccoll}. Thus, the complex $C_i$ collapses to $C'_i:=C_{i+1}\cup(C_i \cap N(\RS(\partial C,\overline{H}_+),\partial C))$ by Lemma~\ref{lem:cecoll}. Now, we have $\varSigma_{i+1}\cap C_i=C'_i$ as in case {(3)}, so that $\varSigma_i$ collapses onto $\varSigma_{i+1}$ by Lemma~\ref{lem:uc}.

\smallskip

This finishes the proof of $\io_{d}$ and $\iit_{d}$ of Theorem~\ref{thm:ConvexEndo2}. It remains to prove the notationally simpler case $\iii_{d}$.

\smallskip

\noindent \textbf{Proving} $\iii_d$: Since $C$ is contained in the open hemisphere $\intx \overline{H}_+$, we may assume, by central projection, that $C$ is actually a convex simplicial complex in $\R^d$. Let $\nu$ be generic in $S^{d-1}\subset\R^d$.

The vertices of $C$ are totally ordered according to the decreasing value of $\langle \cdot, \nu \rangle$ on them. Let us extend this order to a total order on the vertices of $\sd C$, using the derived order. Let $v_i$ denote the $i$-th vertex of $\F_0(\sd C)$ in the derived order, starting with the maximal vertex $v_0$ and ending up with the minimal vertex $v_n$. 

The complex $\Lk(v_0, C)= \LLk(v_0,C)$ is a subdivision of the convex polytope $\TT^1_{v_0} C$ in the sphere $\TT^1_{v_0} \R^d$ of dimension $d-1$. By assumption $\iii_{d-1}$, the complex 
$
\Lk(v_0,\sd C)\cong \sd \Lk(v_0,C)
$
collapses onto $\partial \sd  \Lk(v_0, C)-F'$, where $F'$ is some facet of $\partial \Lk(v_0,\sd C)$. By Lemma~\ref{lem:cecoll}, the complex $\sd C$ collapses to \[\varSigma_1:=(\sd C-v_0)\cup (\partial \sd C -F) = (\sd C-v_0)\cup C_F,\] where $F:=v_0\ast F'$ and $C_F=\partial \sd C - F$. 

We proceed removing the vertices one by one, according to their position in the order we defined. More precisely, set $C_i:=\sd  C-\{v_0,\, \cdots, v_{i-1}\}$, and set $\varSigma_i:= C_i\cup C_F$. We shall now show that $\varSigma_i\searrow \varSigma_{i+1}$ for all $i$, $1\le i\le n-1$; this in particular implies $\iii_{d}$. There are four cases to consider:

\begin{compactenum}[(1)]
\item $v_i$ corresponds to an interior vertex of $C$.
\item $v_i$ is in the interior of $\sd C$ and corresponds to a face of $C$  of positive dimension.
\item $v_i$ corresponds to a boundary vertex of $C$.
\item $v_i$ is in the boundary of $\sd C$ and corresponds to a face of $C$ of positive dimension.
\end{compactenum}

\smallskip
\noindent \emph{Case $(1)$}: In this case, the complex $\Lk (v_i,\varSigma_i)$ is combinatorially equivalent to the simplicial complex $N(\LLk(v_i,C),\Lk(v_i,C))$ in the $(d-1)$-sphere $\TT^1_{v_i} \R^d$. By assumption $\io_{d-1}$, the complex \[N(\LLk(v_i,C),\Lk(v_i,C))\cong \Lk (v_i,\varSigma_i)\] is collapsible. Consequently, by Lemma~\ref{lem:cecoll}, the complex $\varSigma_i$ collapses onto $\varSigma_{i+1}=\varSigma_i-v_i$.

\smallskip
\noindent \emph{Case $(2)$}: If $v_i$ corresponds to a face $\tau$ of $C$ of positive dimension, let $w$ denote the vertex of $\tau$ minimizing~$\langle \cdot, \nu \rangle$. The complex $\Lk(v_i, \varSigma_i)$ is combinatorially equivalent to the order complex of the union $\mathrm{L}(\tau,C)\cup w$, whose elements are ordered by inclusion. Since $w$ is a unique global minimum of this poset, the complex $\Lk(v_i, \varSigma_i)$ is a cone (with a base naturally combinatorially equivalent to $\sd   \Lk(\tau, C)$). Thus, $\Lk(v_i, \varSigma_i)$ is collapsible since every cone is collapsible (Lemma~\ref{lem:ccoll}). Consequently, Lemma~\ref{lem:cecoll} gives $\varSigma_i\searrow \varSigma_{i+1}=\varSigma_i-v_i$.

\smallskip
\noindent \emph{Case $(3)$}: Similar to case {(1)}, $\Lk (v_i,C_i)$ is combinatorially equivalent to the derived neighborhood $N(\LLk(v_i,C),\Lk(v_i,C))$ of $\LLk(v_i,C)$ in $\Lk(v_i,C))$ in the $(d-1)$-sphere $\TT^1_{v_i} \R^d$. By assumption $\iit_{d-1}$, the complex \[N(\LLk(v_i,C),\Lk(v_i,C))\cong \Lk (v_i,C_i) \] collapses to 
\[N(\LLk(v_i, \partial C), \Lk(v_i,\partial C))\cong \Lk (v_i,C'_i),\ \  C'_i:=C_{i+1} \cup (C_i\cap C_F).\]
Consequently, Lemma~\ref{lem:cecoll} proves that $C_i$ collapses to $C'_i$. Now,
\[ \varSigma_{i+1}\cap C_i =(C_{i+1}\cup C_F)\cap C_i=  C_{i+1} \cup (C_i\cap C_F)= C'_i.\]
If we apply Lemma~\ref{lem:uc} to the union $\varSigma_i=C_i\cup \varSigma_{i+1}$, we obtain that $\varSigma_i$ collapses to the subcomplex~$\varSigma_{i+1}.$

\smallskip
\noindent \emph{Case $(4)$}: As in case {(2)} above, $\Lk(v_i,C_i)$ is naturally combinatorially equivalent to a cone over base $\sd   \Lk(\tau, C)$, which collapses to the cone over the subcomplex $ \sd  \Lk(\tau, \partial C)$ by Lemma~\ref{lem:ccoll}. Thus, the complex $C_i$ collapses to $C'_i:=C_{i+1}\cup(C_i \cap C_F)$ by Lemma~\ref{lem:cecoll}.

Now, we have $\varSigma_{i+1}\cap C_i=C'_i$ as in case {(3)}, so that $\varSigma_i$ collapses onto $\varSigma_{i+1}$ by Lemma~\ref{lem:uc}.
\end{proof}

\subsection*{Lickorish's conjecture and Hudson's problem}

In this section, we provide the announced partial answers to Lickorish's conjecture (Theorem~\ref{thm:liccon}) and Hudson's Problem (Theorem~\ref{thm:hudson}). 

\begin{theorem}\label{thm:hudson}
Let $C, C'$ be polytopal complexes such that $C' \subset C$ and $C\searrow C'$. Let $D$ denote any subdivision of $C$, and define $D':=\RS(D,C')$. Then, $\sd D   \searrow   \sd D'$.
\end{theorem}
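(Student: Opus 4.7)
\begin{proofp}
By induction on the number of elementary collapses in $C\searrow C'$, it suffices to prove the statement when the collapse consists of a single step $C\searrow_e C-\sigma$, with $\sigma$ a free face of $C$ contained in a unique larger face $\Sigma$. In that case $|D'|=|C|\setminus(\rint\sigma\cup\rint\Sigma)$. Setting $E := \RS(D, \Sigma)$ (a convex polytopal complex, since it is a subdivision of the polytope $\Sigma$) and $G := \RS(E, \partial\Sigma\setminus\rint\sigma) = \RS(D, \partial\Sigma\setminus\rint\sigma)$, we obtain $D = D' \cup E$ with $D' \cap E = G$, whence $\sd D = \sd D' \cup \sd E$ and $\sd D' \cap \sd E = \sd G$. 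By Lemma~\ref{lem:uc}, it is enough to prove $\sd E \searrow \sd G$.

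This remaining statement is essentially the special case of the theorem for $C=\Sigma$ a single polytope and $C'=\partial\Sigma - \sigma$, and I plan to prove it by adapting the vertex-processing strategy from the proof of Theorem~\ref{thm:ConvexEndo2}(C). Let $k = \dim \Sigma$ and pick a direction $\nu\in S^{k-1}$ close to the outward normal of $\sigma$ with respect to $\Sigma$, perturbed so that the vertices of $E$ are totally ordered by $\langle\cdot,\nu\rangle$ and vertices of $\sigma$ are ranked highest. Let $V_{\mathrm{bad}}$ denote the set of vertices of $\sd E$ corresponding to faces of $E$ in $\rint\sigma\cup\rint\Sigma$---that is, exactly the vertices of $\sd E$ absent from $\sd G$. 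Order the elements of $V_{\mathrm{bad}}$ as $v_0,\ldots,v_{n-1}$ by the induced derived order (Definition~\ref{def:extord}), and set $C_i := \sd E - \{v_0,\ldots,v_{i-1}\}$. Then $C_0 = \sd E$ and $C_n = \sd G$ (the latter because $\sd G$ is precisely the full subcomplex of $\sd E$ on the complementary vertex set), so it suffices to show $C_i\searrow C_{i+1}$ for each $i$, which by Lemma~\ref{lem:cecoll} reduces to the collapsibility of $\Lk(v_i, C_i)$.

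Mirroring the case analysis in the proof of Theorem~\ref{thm:ConvexEndo2}(C), $\Lk(v_i, C_i)$ turns out to be combinatorially equivalent to a derived neighborhood of the form $N(\LLk(v_i, E), \Lk(v_i, E))$ inside $\TT^1_{v_i} \R^k$. When $v_i$ corresponds to an interior face of $E$, collapsibility follows directly from Theorem~\ref{thm:ConvexEndo2}(A). The chief obstacle is the boundary case, where $v_i$ corresponds to a face of $E$ in $\rint\sigma$: here $\Lk(v_i, E)$ is a convex polytopal complex only in a hemisphere of $\TT^1_{v_i} \R^k$, and a direct application of Theorem~\ref{thm:ConvexEndo2}(C) yields only a partial collapse of the link. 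The resolution exploits the choice of $\nu$: since $\sigma$-vertices come first in the processing order, at the moment we process a $v_i$ in $\rint\sigma$ we have so far only removed other vertices in $V_{\mathrm{bad}}$, none of which lie in $\sd G$; the partial link-collapse furnished by Theorem~\ref{thm:ConvexEndo2}(B) applied to $\Lk(v_i, E)$ combines with Lemma~\ref{lem:uc} to produce the required elementary collapse $C_i \searrow C_{i+1}$.
\end{proofp}
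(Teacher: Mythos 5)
Your reduction is fine: writing $\sd D=\sd D'\cup \sd E$ with $\sd D'\cap \sd E=\sd G$ and invoking Lemma~\ref{lem:uc} correctly reduces the theorem to the single claim $\sd E\searrow \sd G$, and this is also where the paper's proof does its work. The gap is in your proof of that claim. Your plan is to delete exactly the vertices of $V_{\mathrm{bad}}$ one at a time, each step being $C_i\searrow C_i-v_i$, justified by collapsibility of $\Lk(v_i,C_i)$. But the link identifications you import from the proof of Theorem~\ref{thm:ConvexEndo2}(C) rely on the fact that there \emph{every} vertex of $\sd C$ above $v_i$ in the derived order has already been removed from $C_i$; in the derived order every strict face of a cell $\tau$ other than its minimal vertex precedes $\tau$, and in your scheme the ``good'' vertices (faces of $E$ in $\partial\Sigma\setminus\rint\sigma$) are never deleted. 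Consequently, when $v_i$ corresponds to a positive-dimensional bad face $\tau$ having a good proper face, $\Lk(v_i,C_i)$ is neither a cone over the minimal vertex nor a derived neighborhood $N(\LLk(v_i,E),\Lk(v_i,E))$. Concretely, take $\tau$ an interior edge of $E$ whose upper endpoint $u$ (with respect to $\nu$) lies in $\partial\sigma$ or in another facet of $\Sigma$: at the moment $v_\tau$ is processed, the faces of $C_i$ comparable to $\tau$ are $\mathrm{L}(\tau,E)\cup\{u\}\cup\{w\}$ ($w$ the minimal vertex), so $\Lk(v_\tau,C_i)$ is the join of two points with $\sd\Lk(\tau,E)$, i.e.\ a subdivided $(k-1)$-sphere. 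That link is not collapsible, and the step $C_i\searrow C_i-v_i$ cannot be obtained this way.

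The second problem is the one you flag yourself, but the proposed fix does not work. For $v_i$ corresponding to a vertex of $E$ in $\rint\sigma$, Theorem~\ref{thm:ConvexEndo2}(B) only gives a collapse of the link onto a nonempty subcomplex $S$ (a derived neighborhood inside $\partial\Lk(v_i,E)$), so Lemma~\ref{lem:cecoll} yields $C_i\searrow (C_i-v_i)\cup(v_i\ast S)$, a complex that still contains $v_i$. Lemma~\ref{lem:uc} cannot dispose of the cone $v_i\ast S$: it transfers a collapse of one piece across a union, it deletes nothing; and the faces of $v_i\ast S$ lie over faces of $E$ meeting $\rint\sigma$, hence belong neither to $\sd G$ nor to any retained subcomplex. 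This is exactly where your situation differs from the proof of Theorem~\ref{thm:ConvexEndo2}(C): there the un-collapsed boundary portions of the links sit inside $C_F=\sd\partial C-F$, which is part of the target, so the union lemma suffices; your target $\sd G$ omits everything over $\rint\sigma$. The intended route for the key step is Corollary~\ref{cor:ConvexEndo2}, which packages (C) for precisely this purpose: choose a facet $\delta$ of $\RS(\sd D,\sigma)$ that is a codimension-one face of a facet $\varDelta$ of $\sd E$; then $\delta$ is a free face of $\sd E$ (and of $\sd D$), and after the elementary collapse removing $\delta$ and $\varDelta$, Corollary~\ref{cor:ConvexEndo2} applied to $E=\RS(D,\varSigma)$ gives $\sd E-\varDelta\searrow \RS(\sd E,\partial\varSigma)$, and applied to $\RS(D,\sigma)$ gives $\RS(\sd D,\sigma)-\delta\searrow \RS(\sd D,\partial\sigma)$; concatenating these collapses yields $\sd E\searrow \sd G$, and your Lemma~\ref{lem:uc} reduction then completes the proof.
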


\begin{proof}
It suffices to prove the claim for the case where 
$C'$ is obtained from $C$ by a single elementary collapse; the claim then follows by induction on the number of elementary collapses. 
Let $\sigma$ denote the free face deleted in the collapsing, and let $\varSigma$ denote the unique face of $C$ that strictly contains it.

Let $(\delta, \varDelta)$ be any pair of faces of $\sd D$, such that $\delta$ is a facet of $\RS(\sd D,\sigma)$, $\varDelta$ is a facet of $\RS(\sd D,\varSigma)$, and $\delta$ is a codimension-one face of $\varDelta$. With this, the face $\delta$ is a free face of $\sd D$.
Now, by Corollary~\ref{cor:ConvexEndo2}, $\RS(\sd D,\varSigma)-\varDelta$ collapses onto $\RS(\sd D,\partial \varSigma)$. Thus, $\sd D-\varDelta$ collapses onto $\RS(\sd D,\sd D{{\setminus}}\rint\varSigma)$, or equivalently, \[\sd D-\delta\searrow \RS(\sd D,\sd D{{\setminus}}\rint\varSigma)-\delta.\] 
Now, $\RS(\sd D,\sigma)-\delta$ collapses onto $\RS(\sd D,\partial \sigma)$ by Corollary~\ref{cor:ConvexEndo2}, and thus \[\RS(\sd D,\sd D{{\setminus}}\rint\varSigma)-\delta \searrow \RS(\sd D,\sd D{{\setminus}}(\rint\varSigma\cup\rint\sigma)).\] To summarize, if $C$ can be collapsed onto $C-\sigma$, then 
\begin{align*}
\sd D  \ &\searrow_e \  \sd D-\delta \\ &\searrow \ \RS(\sd D,\sd D{{\setminus}}(\rint\varSigma\cup\rint\sigma)) =\  \RS(\sd D, C-\sigma)= \RS(\sd D, C').\qedhere 
\end{align*}
\end{proof}

\begin{lemma}[Bruggesser--Mani]\label{lem:BruMani} Let $\mu$ be any face of a $d$-dimensional polytope $P$. 
For any polytope $P$ and for any face $\mu$,  $P$ collapses onto 
$\St(\mu, \partial P)$, which is collapsible.
\end{lemma}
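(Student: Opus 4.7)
The plan is to apply the line-shelling construction of Bruggesser--Mani to $\partial P$, in a form refined so that $\St(\mu, \partial P)$ occupies the initial segment. Concretely, one picks a projective transformation sending a supporting hyperplane $H_\mu$ of $\mu$ (with $H_\mu \cap P = \mu$) to the hyperplane at infinity, and then takes a generic line $\ell$ in the resulting affine picture. Moving a viewpoint along $\ell$ produces an ordering $F_1, F_2, \ldots, F_n$ of the facets of $P$; since every facet not containing $\mu$ has its supporting hyperplane strictly separated from $\mu$ while the hyperplanes of facets in $\St(\mu, \partial P)$ all contain the affine span of $\mu$, a suitable choice of direction ensures that the first $k$ facets visible are precisely the facets of $\St(\mu, \partial P)$.

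Given such a shelling, collapsibility of $\St(\mu, \partial P)$ is automatic: $F_1 \cup \cdots \cup F_k$ is the initial segment of a shelling and hence a shellable $(d-1)$-ball, so one collapses the facets in reverse order $F_k, F_{k-1}, \ldots, F_2$, using at each stage the free $(d-2)$-face of $F_i$ provided by the shelling condition (a facet of $\partial F_i$ not contained in $F_1 \cup \cdots \cup F_{i-1}$). What remains is a single polytope $F_1$, which is collapsible by induction on dimension applied to the lemma itself (with any face of $F_1$ playing the role of $\mu$).

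For the claim $P \searrow \St(\mu, \partial P)$, I would regard $P$ as the polytopal complex consisting of the $d$-cell $P$ and all faces of $\partial P$. Any facet of $\partial P$ is a free face of $P$ in this complex, so the elementary collapse $P \searrow_e P - F_n$ is valid, eliminating both the $d$-cell $P$ and the facet $F_n$ at once. Continuing in reverse shelling order, I would then collapse $F_{n-1}, F_{n-2}, \ldots, F_{k+1}$ one by one; for each $F_i$ the shelling condition again supplies a free $(d-2)$-face of $F_i$ outside $F_1 \cup \cdots \cup F_{i-1}$ (the already-removed facet $F_n$ is irrelevant, since its $(d-2)$-faces all lie in other facets of $\partial P$ and so persist in the current complex). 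After $n - k$ elementary collapses only $\St(\mu, \partial P) = F_1 \cup \cdots \cup F_k$ remains.

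The main obstacle is the refined Bruggesser--Mani shelling: while the existence of \emph{some} shelling of $\partial P$ is the classical theorem, arranging $\St(\mu, \partial P)$ as an initial segment requires the projective-transformation trick described above together with a genericity argument verifying that the resulting visibility order is a genuine shelling (transversality of $\ell$ with all facet hyperplanes and avoidance of codimension-two flats, both generic) and that the facets outside $\St(\mu, \partial P)$ are delayed (which follows from the positive distance of their supporting hyperplanes from $\mu$). None of this introduces conceptual content beyond what is already contained in the Bruggesser--Mani construction.
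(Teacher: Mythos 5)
Your strategy is the same as the paper's: a Bruggesser--Mani line (``rocket'') shelling of $\partial P$ arranged so that the facets of $\St(\mu,\partial P)$ form an initial segment, then a collapse of $P$ through the remaining facets in reverse shelling order, and finally collapsibility of the star itself. (For that last point the paper simply observes that $\St(\mu,\partial P)$ is a cone, hence collapsible; your detour --- collapse the shelled ball down to $F_1$ and induct on dimension --- is workable but longer.)

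There is, however, a concrete error in how you execute the reverse-order collapses. Removing the free pair $(\delta_i, F_i)$, where $\delta_i$ is a ridge of $F_i$ not contained in $F_1\cup\dots\cup F_{i-1}$, deletes \emph{only} $\delta_i$ and $F_i$; every other face of $F_i$ that fails to lie in $F_1\cup\dots\cup F_{i-1}$ survives. So after your $n-k$ elementary collapses the remaining complex is in general strictly larger than $F_1\cup\dots\cup F_k$, and after collapsing $F_k,\dots,F_2$ what remains is not the single polytope $F_1$: there is lower-dimensional debris. Already for $P$ a tetrahedron with vertices $v,a,b,c$ and $\mu=v$, with the shelling $F_1=vab$, $F_2=vbc$, $F_3=vac$ of the star, removing $(ac,F_3)$ and then $(bc,F_2)$ leaves $F_1$ together with the edge $vc$ and the vertex $c$; a similar phenomenon occurs in your collapse of $P$ onto the star (e.g.\ for the $3$-cube). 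The statement you actually need is the one the paper invokes: in reverse shelling order one collapses away, for each facet $F_i$, the \emph{entire} set of faces of $F_i$ not contained in $F_1\cup\dots\cup F_{i-1}$ (for simplicial cells this set is an interval $[\sigma_i,F_i]$ and is removed by the standard vertex-matching sequence of elementary collapses; for general polytopal cells one argues by induction on dimension, or quotes that a contractible shellable complex collapses along the inverse shelling order). With that repair --- which is routine and is exactly the fact the paper cites without proof --- your argument goes through and essentially coincides with the paper's.
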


\begin{proof}
Performing a rocket shelling of $\partial P$ with a generic line through $\sigma$, one gets a shelling of $\partial P$ in which $\St(\sigma, \partial P)$ is shelled first~\cite[Cor.~8.13]{Z}. Let $\tau$ be the last facet of such a shelling. Any contractible shellable complex is collapsible; the collapsing sequence of the facets is given by the inverse shelling order. Therefore $\partial P - \tau$ collapses onto $\St(\sigma, \partial P)$. But as a polytopal complex, the complex $P$ collapses polyhedrally onto $\partial P - \tau$; so $P$ collapses also onto $\St(\sigma, \partial P)$. Being a cone, $\St(\sigma, \partial P)$ is collapsible.
\end{proof}

\begin{theorem}\label{thm:liccon}
Let $C$ denote any subdivision of a convex $d$-polytope. Then $\sd C$ is collapsible.
\end{theorem}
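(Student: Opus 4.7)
\begin{proofp}
The strategy is to combine Lemma~\ref{lem:BruMani} with Hudson's theorem (Theorem~\ref{thm:hudson}). Let $P$ denote the convex $d$-polytope that $C$ subdivides, and view $P$ as the polytopal complex consisting of $P$ together with all its faces. Pick any vertex $v$ of $P$. By Lemma~\ref{lem:BruMani} applied with $\mu = v$, the polytopal complex $P$ collapses onto the star $\St(v,\partial P)$, and this star is itself collapsible (being a cone over $\Lk(v,\partial P)$). Composing the two collapsing sequences, $P$ collapses polyhedrally all the way down to the single vertex $\{v\}$.

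Now $C$ is a subdivision of the polytopal complex $P$. Setting $C' := \{v\}$ (a subcomplex of $P$), Theorem~\ref{thm:hudson} applies with $D = C$ and yields
\[
\sd C \ \searrow \ \sd \RS(C, \{v\}) \ = \ \{v\},
\]
since the restriction of $C$ to the single point $v$ is just $v$ itself, and so is its derived subdivision. Hence $\sd C$ is collapsible.

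There is essentially no obstacle left: the two key lemmas have already done all the work. Lemma~\ref{lem:BruMani} provides a polyhedral collapse of $P$ to a point (via the rocket shelling of $\partial P$), and Theorem~\ref{thm:hudson} promotes any such polyhedral collapse of $P$ to a simplicial collapse of $\sd C$ onto the derived subdivision of the restriction, which here is a single vertex. The only minor care needed is to ensure that we interpret $P$ as a polytopal complex (so that Hudson's theorem applies verbatim with $C$ as a subdivision of it) and that the target of the collapse is small enough (a vertex) to make collapsibility immediate.
\end{proofp}
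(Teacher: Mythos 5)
Your proof is correct and follows essentially the same route as the paper: Lemma~\ref{lem:BruMani} makes the polytope $P$ (as a polytopal complex) collapse to a vertex, and Theorem~\ref{thm:hudson} then promotes this to a collapse of $\sd C$ onto that vertex. Your version just spells out the choice $C'=\{v\}$ and the identification $\RS(C,\{v\})=\{v\}$, which the paper leaves implicit.
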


\begin{proof}
By Lemma~\ref{lem:BruMani}, the polytope $P$ is collapsible. Now for any subdivision $C$ of $P$, the facets of $C$ are all convex polytopes (not necessarily simplicial). Hence $\sd C$ is collapsible by Theorem~\ref{thm:hudson}.
\end{proof}

\section*{Acknowledgments}
Karim~Adiprasito acknowledges support by a Minerva fellowship of the Max Planck Society, an NSF Grant DMS 1128155, an ISF Grant 1050/16 and ERC StG 716424 - CASe.\\
Bruno~Benedetti acknowledges support by an NSF Grant 1600741, the DFG Collaborative Research Center TRR109, and the Swedish Research Council VR 2011-980. 
\\Part of this work was supported by the National Science Foundation under Grant No. DMS-1440140 while the authors were in residence at the Mathematical Sciences Research Institute in Berkeley, California, during the Fall 2017 semester.

{\small
\def\cprime{$'$}
\providecommand{\bysame}{\leavevmode\hbox to3em{\hrulefill}\thinspace}
\providecommand{\MR}{\relax\ifhmode\unskip\space\fi MR }
\providecommand{\MRhref}[2]{%
  \href{http://www.ams.org/mathscinet-getitem?mr=#1}{#2}
}
\providecommand{\href}[2]{#2}

}

\end{document}